\def\pi{\uppi}
\newtheorem{lemma}{Lemma}
\newtheorem{theorem}{Theorem}
\newtheorem{proposition}{Proposition}
\newtheorem{corollary}{Corollary}
\newcommand{\R}{\mathbb{R}}
\newcommand{\Z}{\mathbb{Z}}
\newcommand{\Prob}{\mathbb{P}}
\newcommand{\Q}{\mathbb{Q}}
\newcommand{\Expec}{\mathbb{E}}
\newcommand{\Var}{\operatorname{\mathbb{V}ar}}
\newcommand{\Cov}{\operatorname{\mathbb{C}ov}}
\begin{document}
\begin{frontmatter}

\title{On asymptotic constants in the theory of extremes for Gaussian processes}

\runtitle{On asymptotic constants in the theory of extremes for
Gaussian processes}

\begin{aug}
\author[a]{\inits{A.B.}\fnms{A.B.} \snm{Dieker}\corref{}\thanksref{a}\ead[label=e1]{ton.dieker@isye.gatech.edu}}
\and
\author[b]{\inits{B.}\fnms{B.} \snm{Yakir}\thanksref{b}\ead[label=e2]{msby@mscc.huji.ac.il}}
\runauthor{A.B. Dieker and B. Yakir} 
\address[a]{H. Milton Stewart School of Industrial and Systems Engineering,
Georgia Institute of Technology, Atlanta, GA 30332, USA.
\printead{e1}}
\address[b]{Department of Statistics, Hebrew University of Jerusalem,
Mount Scopus, Jerusalem 91905, Israel.
\printead{e2}}
\end{aug}

\received{\smonth{8} \syear{2012}}
\revised{\smonth{3} \syear{2013}}

%
\begin{abstract}
This paper gives a new representation of Pickands' constants, which
arise in the study
of extremes for a variety of Gaussian processes.
Using this representation, we resolve the long-standing problem of
devising a
reliable algorithm for estimating these constants.
A detailed error analysis illustrates the strength of our approach.
\end{abstract}

%
\begin{keyword}
\kwd{extremes}
\kwd{Gaussian processes}
\kwd{Monte Carlo simulation}
\kwd{Pickands' constants}
\end{keyword}

\end{frontmatter}

\section{Introduction}
Gaussian processes and fields have emerged as a versatile yet relatively
tractable class of models
for random phenomena.
Gaussian processes have been applied fruitfully to risk theory, statistics,
machine learning, and biology, while
Gaussian fields have been applied to neuroimaging, astrophysics, oceanography,
as well as to other fields.
Extremes and level sets are particularly important in these applications
(Aza{\"{\i}}s and Wschebor \cite{azaisMR2478201}).
New applications and theoretical developments
continually revive the interest in Gaussian processes, see, for
instance, Meka \cite{meka:PTAS}.

Although the understanding of Gaussian processes and fields has advanced
steadily over the past decades, a variety of results related to extremes
(tail asymptotics, extreme value theorems, laws of iterated logarithm)
are only
``explicit'' up to certain constants.
These constants are referred to as Pickands' constants after their discoverer
(Pickands, III \cite{pickands:asymptotic1969}).
It is believed that these constants
may never be calculated (Adler \cite{adler:gaussianprocesses1990}).

These constants have remained so elusive that devising an estimation algorithm
with certain performance guarantees has remained outside the scope of
current methodology (D{\c{e}}bicki and Mandjes \cite{MR2834197}).
The current paper resolves this open problem for the classical
Pickands' constants.
Our main tool is a new representation for Pickands' constant,
which expresses the constant as the expected
value of a random variable with low variance and therefore it is
suitable for simulation.
Our approach also gives rise to a number of new questions,
which could lead to further improvement of our simulation algorithm or
its underlying theoretical foundation.
We expect that our methodology carries through for all of Pickands'
constants, not
only for the classical ones discussed here.

Several different representations of Pickands' constants are known,
typically arising from various methodologies for studying extremes of Gaussian
processes.
H{\"u}sler \cite{husler:extremes1999} uses triangular arrays to interpret
Pickands' constant as a clustering index.
Albin and Choi \cite{MR2685014} have recently rediscovered H\"usler's
representation.
For sufficiently smooth Gaussian processes, various level-crossing
tools can be
exploited
(Aza{\"{\i}}s and
Wschebor \cite{azaiswschebor:maxfield2005},
Kobelkov \cite{kobelkov:ruin2005}).
Yet another representation is found when a sojourn approach is
taken (Berman~\cite{berman:sojournsextremes1992}).
Aldous \cite{aldous:poissonclumping1989} explains
various connections heuristically and also gives intuition
behind other fundamental results in extreme-value theory.
We also mention Chapter~12 in Leadbetter \textit{et~al.} \cite{leadbetter:extremes1983},
who use methods different from those of Pickands but arrive at the same
representation.

The approach advocated in the current paper is inspired by a method
which has
been applied successfully
in various statistical settings, see Siegmund \textit{et~al.} \cite{MR2747524} and references therein.
This method relies on a certain change-of-measure argument,
which results in asymptotic expressions with a term of the form
$\Expec(M/S)$, where $M$ and $S$ are
supremum-type and sum-type (or integral-type) functionals, respectively.
This methodology can also be applied directly to study extremes of Gaussian
processes,
in which case it yields a new method for establishing tail asymptotics.
This will be pursued elsewhere.

Throughout this paper, we let $B=\{B_t\dvt t\in\R\}$ be a standard fractional
Brownian motion
with Hurst index $\alpha/2\in(0,1]$, that is, a centered Gaussian
process for
which
\[
\Cov(B_s,B_t) = \tfrac12 \bigl[|s|^\alpha+
|t|^\alpha- |t-s|^\alpha \bigr].
\]
Note that has stationary increments and variance function $\Var(B_t) =
|t|^{\alpha}$.
The process $\{Z_t\}$ defined through $Z_t = \sqrt{2} B_t - |t|^\alpha$
plays a key role in this paper.
This stochastic process plays a fundamental role in the stochastic
calculus for
fractional
Brownian motion (Bender and Parczewski \cite{MR2668907}).
The ``classical'' definition of Pickands' constant $\mathcal H_\alpha$ is
%
%
\begin{equation}
\label{eq:defpickands} \mathcal H_\alpha= \lim_{T\to\infty}
\frac1T \Expec \Bigl[\sup_{t\in[0,T]} \mathrm{e}^{Z_t} \Bigr].
\end{equation}

Current understanding of $\mathcal H_\alpha$ and related constants is quite
limited.
It is known that $\mathcal H_{1}=1$ and that $\mathcal H_{2} = 1/\sqrt
\pi$
(Bickel and
Rosenblatt \cite{bickelrosenblatt:deviations1973},
Piterbarg \cite{piterbarg:asymptoticmethods1996}),
and that $\mathcal H_\alpha$ is continuous as a function of $\alpha$
(D{\c{e}}bicki \cite{debicki:remarks2002}).
Most existing work focuses on obtaining sharp bounds for these constants
(Aldous \cite{aldous:poissonclumping1989},
D{\c{e}}bicki \cite{debicki:ruinprob2002},
D{\c{e}}bicki and Kisowski \cite{MR2458013},
D{\c{e}}bicki
\textit{et~al.} \cite{debicki:simulation2003},
Shao \cite{shao:bounds1996},
Harper \cite{harper2010}).
Previous work on estimating Pickands' constant through simulation has
yielded contradictory results
(Burnecki and Michna \cite{MR1944151},
Michna \cite{michna:tailprob1999}).

The next theorem forms the basis for our approach to estimate $\mathcal
H_\alpha$.
Note that the theorem expresses $\mathcal H_\alpha$ in the form $\Expec(M/S)$.
A different but related representation is given in
Proposition~\ref{prop:discreteanalog} below, and
we give yet another representation in Proposition~\ref{prop:albintyperepr}.

\begin{theorem}
\label{thm:newrepr}
We have
\[
\mathcal H_\alpha= \Expec \biggl[\frac{\sup_{t\in\R}\mathrm{e}^{Z_t}}{\int_{-\infty}^\infty
\mathrm{e}^{Z_t}\,\mathrm{d}t} \biggr].
\]
\end{theorem}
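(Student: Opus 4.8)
\medskip
\noindent\emph{Proof strategy.}\
My plan is to deduce the identity from the classical definition \eqref{eq:defpickands} by a change-of-measure argument of Pickands type. The engine is the following observation. Since $B_s\sim N(0,|s|^\alpha)$ we have $\Expec e^{Z_s}=\Expec e^{\sqrt2 B_s-|s|^\alpha}=1$, so for each $s\in\R$ we may define a probability measure $\Prob_s$ by $d\Prob_s/d\Prob=e^{Z_s}$. By the Cameron--Martin/Girsanov shift, under $\Prob_s$ the process $B$ is Gaussian with unchanged covariance but with mean $t\mapsto\sqrt2\,\Cov(B_s,B_t)=\tfrac1{\sqrt2}(|s|^\alpha+|t|^\alpha-|t-s|^\alpha)$; writing $B=B^\circ+(\text{this mean})$ with $B^\circ$ a centered fBm under $\Prob_s$, a short computation gives
\[
Z_t \;=\; \sqrt2\,(B^\circ_t-B^\circ_s)\;-\;|t-s|^\alpha\;+\;\big(\sqrt2\,B^\circ_s+|s|^\alpha\big)\qquad\text{under }\Prob_s .
\]
The last bracket does not depend on $t$, and by the stationary increments of fBm the path $\{B^\circ_t-B^\circ_s\}_t$ has the law of $\{B^\circ_{t-s}\}_t$. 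Hence, under $\Prob_s$, the process $\{Z_t\}_t$ equals in law $\{Z_{t-s}+C\}_t$ with $C$ a ($t$-independent) random constant. Consequently every functional of $\{Z_t\}_t$ invariant under adding a constant to the whole path is, under $\Prob_s$, distributed as its $s$-shift under $\Prob$; the ratio $\sup_{t}e^{Z_t}/\int e^{Z_t}dt$ is exactly such a functional.

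Next I would establish the finite-horizon identity. For $T>0$, insert $\int_0^T e^{Z_r}dr$ over itself and use Tonelli to obtain
\[
\Expec\Big[\sup_{t\in[0,T]}e^{Z_t}\Big]
=\int_0^T\Expec\!\left[e^{Z_r}\,\frac{\sup_{t\in[0,T]}e^{Z_t}}{\int_0^T e^{Z_u}\,du}\right]dr
=\int_0^T\Expec_{\Prob_r}\!\left[\frac{\sup_{t\in[0,T]}e^{Z_t}}{\int_0^T e^{Z_u}\,du}\right]dr .
\]
Applying the invariance from the previous paragraph to the (constant-shift-invariant) ratio on $[0,T]$ rewrites the $r$-th integrand as $\Expec\big[\sup_{v\in[-r,T-r]}e^{Z_v}\big/\int_{-r}^{T-r}e^{Z_v}dv\big]$, so that
\[
\frac1T\,\Expec\Big[\sup_{t\in[0,T]}e^{Z_t}\Big]
=\frac1T\int_0^T\Expec\!\left[\frac{\sup_{v\in[-r,\,T-r]}e^{Z_v}}{\int_{-r}^{T-r}e^{Z_v}\,dv}\right]dr .
\]
Letting $T\to\infty$: for $r$ and $T-r$ both large the window $[-r,T-r]$ exhausts $\R$, the ratio converges almost surely to $\sup_{v\in\R}e^{Z_v}/\int_\R e^{Z_v}dv$, and a Cesàro/dominated-convergence argument identifies the limit of the left-hand side—namely $\mathcal H_\alpha$—with the claimed expectation.

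The genuine work sits in two places. First, one must check that the target ratio is an honest, integrable random variable: $\int_\R e^{Z_t}dt<\infty$ almost surely because $Z_t=\sqrt2 B_t-|t|^\alpha\to-\infty$ as $|t|\to\infty$ by the law-of-the-iterated-logarithm/growth bounds for fBm (note that nonetheless $\Expec\int_\R e^{Z_t}dt=\int_\R\Expec e^{Z_t}\,dt=\infty$, so normalization by the random total mass is essential, and one cannot split the expectation naively). Second—and this is the main obstacle—the passage to the limit requires a bound on $\sup_{I}e^{Z}/\int_I e^{Z}$ that is uniform over all intervals $I$, in order to dominate the averaged expectations above and to control the boundary strip $r\approx 0$ or $r\approx T$. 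I would obtain such control either via a modulus-of-continuity estimate that bounds the denominator from below on a neighbourhood of the maximizer, or—more robustly—by first proving the discrete analogue on a grid $\delta\Z$ (cf.\ Proposition~\ref{prop:discreteanalog}), where the denominator is a sum and the uniform bounds are cleaner, and then letting the mesh $\delta\downarrow0$ together with a continuity argument in $\delta$ (which also ties this representation to the discrete one).
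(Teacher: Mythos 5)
Your skeleton is the same as the paper's: your tilted measure $\Prob_s$ and the observation that under $\Prob_s$ the path of $Z$ is distributed as a shifted copy of $Z$ plus a ($t$-independent) constant is exactly the paper's change-of-measure lemma (Lemma~\ref{lem:changeofmeasure}), and your Tonelli step reproduces Corollary~\ref{cor:relationclassicalnew}. The gap sits in the one genuinely hard step, which you correctly flag as ``the main obstacle'' but do not carry out, and the control you say you need is in fact too strong to be true. A bound on $\sup_{I}e^{Z}/\int_{I}e^{Z}$ that is \emph{uniform over all intervals} $I$ cannot exist as an a.s.\ finite (let alone integrable) random variable: for a distant unit interval $I=[n,n+1]$ the drift $-|t|^\alpha$ decreases by about $\alpha n^{\alpha-1}$ across $I$ while the fluctuation of $\sqrt2 B$ over $I$ is only $O(\sqrt{\log n})$, so for $\alpha>1$ the ratio over $I$ is of order $n^{\alpha-1}e^{O(\sqrt{\log n})}\to\infty$. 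What is both true and sufficient is uniform integrability of the ratios over the specific windows $[-r,T-r]$, all of which contain the origin, and the mechanism that delivers it is precisely what your sketch omits: since $0$ lies in the window, the numerator is at least $e^{Z_0}=1$, so the maximum can fall in a distant unit block $J_j$ only on the event $\{\sqrt2\sup_{s\in J_j}B_s>\min_{s\in J_j}|s|^\alpha\}$, whose probability is of order $\exp(-c\,a_j^{\alpha})$ by Borell's inequality; on that event one bounds the window ratio by the block ratio $M_j/S_j\le \eta^{-1}\exp\bigl(2\sqrt2\sup_{s\in J_j}|B_s-B^\eta_s|+\kappa_j\bigr)$ (your grid idea), and a Cauchy--Schwarz/Borell estimate makes the resulting series over $j$ summable. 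This yields
\[
\lim_{A\to\infty}\ \sup_{T>0}\ \sup_{u\in(0,1)}\ \Expec\Bigl[\tfrac{M_{[-uT,(1-u)T]}}{S_{[-uT,(1-u)T]}};\ \tfrac{M_{[-uT,(1-u)T]}}{S_{[-uT,(1-u)T]}}>A\Bigr]=0,
\]
which is exactly the content of the paper's Proposition~\ref{prop:UIMT}.

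Without the ``window contains $0$, hence a distant maximum is exponentially rare'' ingredient, neither of your two proposed routes closes: the modulus-of-continuity route still faces the non-integrable growth of local ratios far from the origin, and the discretize-then-refine route (via Proposition~\ref{prop:discreteanalog}) needs the very same uniform-integrability estimate — indeed in the paper the discrete representation is deduced \emph{from} that estimate, not the other way around. Once the uniform integrability is in place, your remaining steps are fine: almost sure convergence of the ratio for each fixed $u=r/T\in(0,1)$ plus bounded convergence in $u$ (the endpoints form a Lebesgue-null set) identify the limit of the averaged identity with $\Expec\bigl[\sup_{t\in\R}e^{Z_t}/\int_{-\infty}^{\infty}e^{Z_t}dt\bigr]$, as in the paper.
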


The representation $\Expec(M/S)$ is well-suited for estimating Pickands'
constant by simulation.
Although both $M$ and $S$ are finite random variables with infinite mean,
we provide theoretical evidence that their ratio has low variance
and our empirical results show that this representation is suitable for
simulation.

This paper is organized as follows.
Section~\ref{sec:representations} establishes two results which
together yield
Theorem~\ref{thm:newrepr}.\vadjust{\goodbreak}
In Section~\ref{sec:auxiliary}, we state an auxiliary result that plays a
key role in several of the proofs in this paper.
Section~\ref{sec:estimation} gives an error analysis when $\Expec(M/S)$ is
approximated by a related quantity that can be simulated on a computer.
In Section~\ref{sec:numexp}, we carry out simulation experiments to estimate
Pickands' constant.
Some proofs are deferred to Appendix~\ref{appendix}, and a table with our simulation
results is included as Appendix~\ref{sec:simulatedvalues}.

\section{Representations}
\label{sec:representations}
This section is devoted to connections between Pickands' classical
representation
and our new representation, thus establishing Theorem~\ref{thm:newrepr}.
We also informally argue why our new representation is superior from
the point
of view of estimation. This is explored further in the next section.

The following well-known change-of-measure lemma forms the basis for
our results.

\begin{lemma}
\label{lem:changeofmeasure}
Fix $t\in\R$, and set $Z^{(t)} = \{\sqrt{2} B_s -|s-t|^{2H}\dvt s\in\R\}$.
For an arbitrary measurable functional $F$ on $\R^\R$, we have
\[
\Expec \mathrm{e}^{Z_t} F(Z) = \Expec F\bigl(|t|^{2H} + Z^{(t)}
\bigr).
\]
When the functional $F$ is moreover
translation-invariant (invariant under
addition of a constant function), we have
\[
\Expec \mathrm{e}^{Z_t} F(Z) = \Expec F(\theta_t Z),
\]
where the shift $\theta_t$ is defined through $(\theta_t Z)_s = Z_{s-t}$.
\end{lemma}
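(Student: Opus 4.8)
The plan is to read Lemma~\ref{lem:changeofmeasure} as a Cameron--Martin (Girsanov-type) change-of-measure statement for the Gaussian process $B$, the relevant likelihood ratio being $e^{Z_t}=e^{\sqrt2 B_t-|t|^{\alpha}}$ (recall $2H=\alpha$). The first thing to record is that $\sqrt2 B_t\sim N(0,2|t|^{\alpha})$ since $\Var(B_t)=|t|^{\alpha}$, so $\Expec e^{Z_t}=\exp\!\big(-|t|^{\alpha}+\tfrac12\cdot 2|t|^{\alpha}\big)=1$; hence $e^{Z_t}$ is a genuine probability density and one may change measure by it. The strategy is then: (i) prove the first displayed identity for functionals $F$ depending on finitely many coordinates, where it reduces to an elementary finite-dimensional Gaussian computation; (ii) extend to arbitrary measurable $F$ by a monotone-class argument; (iii) deduce the translation-invariant statement from the first one using that $B$ has stationary increments.

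For (i), take $F(Z)=\phi(Z_{s_1},\dots,Z_{s_n})$ and set $G_0=\sqrt2 B_t$, $G_j=\sqrt2 B_{s_j}$. The vector $(G_0,G_1,\dots,G_n)$ is centered Gaussian with $\Var(G_0)=2|t|^{\alpha}$ and, from the fractional-Brownian covariance, $\Cov(G_0,G_j)=2\Cov(B_t,B_{s_j})=|t|^{\alpha}+|s_j|^{\alpha}-|s_j-t|^{\alpha}$. The elementary Gaussian tilting identity --- for a centered Gaussian vector $(G_0,\dots,G_n)$ and bounded measurable $\psi$,
\[
\Expec\!\left[e^{G_0-\frac12\Var(G_0)}\,\psi(G_1,\dots,G_n)\right]=\Expec\!\left[\psi\big(G_1+\Cov(G_0,G_1),\dots,G_n+\Cov(G_0,G_n)\big)\right],
\]
which one checks at once by comparing the characteristic functions of the two sides (both equal $e^{i\langle\lambda,c\rangle-\frac12\lambda^{\top}\Sigma\lambda}$, with $c_j=\Cov(G_0,G_j)$ and $\Sigma$ the covariance matrix of $(G_1,\dots,G_n)$) --- is applied with $\psi(g_1,\dots,g_n)=\phi(g_1-|s_1|^{\alpha},\dots,g_n-|s_n|^{\alpha})$. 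Its left-hand side is $\Expec[e^{Z_t}F(Z)]$, and since
\[
\Cov(G_0,G_j)-|s_j|^{\alpha}=|t|^{\alpha}-|s_j-t|^{\alpha},
\]
its right-hand side is $\Expec\big[\phi(|t|^{\alpha}+Z^{(t)}_{s_1},\dots,|t|^{\alpha}+Z^{(t)}_{s_n})\big]=\Expec[F(|t|^{\alpha}+Z^{(t)})]$. This is the one place where something genuine happens: the cancellation in the last display works precisely because $\Cov(B_s,B_t)=\tfrac12(|s|^{\alpha}+|t|^{\alpha}-|s-t|^{\alpha})$.

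Step (ii) is routine: the identity is linear in $F$ and stable under bounded increasing limits and holds on the cylinder functionals, which generate the product $\sigma$-field on $\R^\R$, so the functional monotone-class theorem extends it to all bounded measurable $F$, hence to all nonnegative measurable $F$ by monotone convergence and to integrable $F$ by the usual decomposition. For (iii), assume $F$ is invariant under adding a constant function; then $F(|t|^{\alpha}+Z^{(t)})=F(Z^{(t)})$, so it remains to see $\Expec F(Z^{(t)})=\Expec F(\theta_tZ)$. Writing $\tilde B_s:=B_{s-t}-B_{-t}$, stationary increments give $\{\tilde B_s\}_s\stackrel{d}{=}\{B_s\}_s$, and since $B_0=0$ forces $B_{-t}=-\tilde B_t$, we get
\[
(\theta_tZ)_s=Z_{s-t}=\sqrt2 B_{s-t}-|s-t|^{\alpha}=\big(\sqrt2\tilde B_s-|s-t|^{\alpha}\big)-\sqrt2\tilde B_t .
\]
Viewing the right-hand side as a path in $s$ (note $\sqrt2\tilde B_t$ is itself the value of that path at $s=t$, so the coupling is automatic) and using $\tilde B\stackrel{d}{=}B$, this shows $\theta_tZ$ has the same law as $Z^{(t)}$ with the constant $Z^{(t)}_t$ subtracted from every coordinate; translation invariance of $F$ then yields $F(\theta_tZ)\stackrel{d}{=}F(Z^{(t)})$, and taking expectations finishes the proof.

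There is no deep obstacle here --- the lemma is a textbook change-of-measure fact --- and the only two points needing care are the algebraic bookkeeping in step (i) (checking that, after the mean shift $\Cov(G_0,G_\cdot)$, the drift $-|s|^{\alpha}$ in $Z$ turns into exactly $|t|^{\alpha}-|s-t|^{\alpha}$, which rests entirely on the form of the fBM covariance) and the measure-theoretic extension in step (ii), which, though routine, cannot be skipped since the lemma is stated for arbitrary measurable $F$. A reader preferring an abstract route can replace steps (i)--(ii) by quoting the Cameron--Martin theorem directly: $h(\cdot)=\sqrt2\,\Cov(B_\cdot,B_t)$ lies in the Cameron--Martin space of $B$ with $\tfrac12\|h\|^2=|t|^{\alpha}$ and associated first-chaos element $\sqrt2 B_t$, so $\Expec[e^{\sqrt2 B_t-|t|^{\alpha}}\,G(B)]=\Expec[G(B+h)]$; substituting $B+h$ into $s\mapsto\sqrt2 B_s-|s|^{\alpha}$ and using $\sqrt2\,h(s)=|s|^{\alpha}+|t|^{\alpha}-|s-t|^{\alpha}$ gives $|t|^{\alpha}+Z^{(t)}$, which is the claim.
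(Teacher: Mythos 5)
Your proof is correct and takes essentially the same route as the paper: a finite-dimensional Gaussian tilting computation (the paper compares moment generating functions and invokes Cram\'er--Wold, you compare characteristic functions and extend by a monotone-class argument), followed by the same stationary-increments identification of $Z^{(t)}$ minus a constant with $\theta_t Z$ for translation-invariant $F$. Your closing Cameron--Martin remark also mirrors the alternative argument the paper itself sketches.
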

\begin{pf}
Set $\Q(A) = \Expec[\mathrm{e}^{Z_t} 1_A]$, and write $\Expec^\Q$ for the
expectation operator with respect to $\Q$.
Select an integer $k$ and $s_1<s_2<\cdots<s_k$.
We show that $(Z_{s_1},\ldots,Z_{s_k})$ under $\Q$ has the same
distribution as $(|t|^{2H}+Z^{(t)}_{s_1},\ldots
,|t|^{2H}+Z^{(t)}_{s_k})$ under $\Prob$,
by comparing generating functions: for any $\beta_1,\ldots,\beta_k\in\R$,
\begin{eqnarray*}
\log\Expec^\Q\exp \biggl(\sum
_i \beta_i Z_{s_i} \biggr)
&= &-|t|^{2H} -\sum_i
\beta_i |s_i|^{2H} +\Var \biggl[B_t
+ \sum_i \beta_i B_{s_i}
\biggr]
\\
&= &\sum_i 2\beta_i
\Cov(B_t, B_{s_i})-\sum_i
\beta_i |s_i|^{2H} +\Var \biggl[\sum
_i \beta_i B_{s_i} \biggr]
\\
&=& \sum_i \beta_i
\bigl[|t|^{2H}-|s_i-t|^{2H} \bigr]+\Var \biggl[\sum
_i \beta_i B_{s_i} \biggr]
\\
&=& \sum_i\beta_i |t|^{2H}
+\Expec \biggl[\sum_i\beta_i
Z^{(t)}_{s_i} \biggr]+ \frac12 \Var \biggl[\sum
_i\beta_i Z^{(t)}_{s_i}
\biggr]
\\
&=& \Expec \biggl[\sum_i\beta_i
\bigl(|t|^{2H}+Z^{(t)}_{s_i}\bigr) \biggr]+ \frac12 \Var
\biggl[\sum_i\beta_i
\bigl(|t|^{2H} + Z^{(t)}_{s_i}\bigr) \biggr].
\end{eqnarray*}
The first claim of the lemma then immediately follows from the Cram\'
er--Wold device.

Alternatively, one could carefully define a space on which the
distribution of $Z$ becomes a Gaussian measure
and then note that the claim follows from the Cameron--Martin formula;
see Bogachev \cite{bogachev:gaussianmeasures1998}, Proposition~2.4.2 and Dieker \cite
{dieker:condlim2005}
for key ingredients for this approach.

When the functional $F$ is
translation-invariant, we conclude that
\[
\Expec^\Q F(Z) = \Expec F\bigl(|t|^{2H} + Z^{(t)}
\bigr) = \Expec F\bigl(Z^{(t)}-\sqrt 2B_t\bigr) = \Expec F(
\theta_t Z),
\]
and this proves the second claim in the lemma.
\end{pf}

The next corollary readily implies subadditivity of
$\Expec [\sup_{0\le t\le T} \mathrm{e}^{Z_t} ]$ as a function of $T$,
a well-known fact that immediately yields the existence of the
limit in (\ref{eq:defpickands}).
Evidently, we must work under the usual separability conditions, which
ensure that the supremum functional is measurable.

\begin{corollary}
For any $a<b$, we have
\[
\Expec \Bigl[\sup_{a\le t\le b} \mathrm{e}^{Z_t} \Bigr] = \Expec \Bigl[
\sup_{0\le t\le b-a} \mathrm{e}^{Z_t} \Bigr], \qquad\Expec \biggl[\int
_a^b \mathrm{e}^{Z_t}\,\mathrm{d}t \biggr] = \Expec \biggl[
\int_0^{b-a} \mathrm{e}^{Z_t}\,\mathrm{d}t \biggr].
\]
\end{corollary}
\begin{pf}
Applying Lemma~\ref{lem:changeofmeasure} for $t=a$ to the
translation-invariant functionals $F$
given by
$
F(z) = \sup_{a\le s\le b} \mathrm{e}^{z_s-z_a}
$
and $F(z) = \int_a^b \mathrm{e}^{z_t-z_a}\,\mathrm{d}t$
yields the claims. (The second claim is also immediate from $\Expec \mathrm{e}^{Z_t}=1$.)
\end{pf}

\begin{corollary}
\label{cor:relationclassicalnew}
For $T>0$, we have
%
%
\begin{equation}
\label{eq:relationclassicalnew} \frac{1}{T}\Expec \Bigl[\sup
_{0\leq t \leq
T}\mathrm{e}^{Z_t} \Bigr]= \int_{0}^{1}
\Expec \biggl[\frac{\sup_{-u T\leq s \leq
(1-u)T}\mathrm{e}^{Z_s}}{\int_{-uT}^{(1-u)T}\mathrm{e}^{Z_s}\,\mathrm{d}s} \biggr] \,\mathrm{d}u.
\end{equation}
%
\end{corollary}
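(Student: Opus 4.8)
The plan is to express the right-hand side of \eqref{eq:relationclassicalnew} as an expectation of a single supremum by using the change-of-measure Lemma~\ref{lem:changeofmeasure}, and then recognize the result as $\frac 1T\Expec[\sup_{0\le t\le T}e^{Z_t}]$. The key observation is the elementary identity, valid for any nonnegative measurable path $t\mapsto e^{Z_t}$ with the indicated supremum finite and the integral positive and finite,
\[
\sup_{0\le t\le T}e^{Z_t} \;=\; \int_0^T \frac{\sup_{0\le s\le T}e^{Z_s}}{\int_0^T e^{Z_r}\,dr}\;e^{Z_t}\,dt,
\]
which just says that the integrand integrates to the supremum. Taking expectations and using Fubini (justified since the integrand is nonnegative), we get
\[
\Expec\Bigl[\sup_{0\le t\le T}e^{Z_t}\Bigr] \;=\; \int_0^T \Expec\!\left[e^{Z_t}\,\frac{\sup_{0\le s\le T}e^{Z_s}}{\int_0^T e^{Z_r}\,dr}\right]dt.
\]

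Next I would apply the translation-invariant form of Lemma~\ref{lem:changeofmeasure} to each term in the $t$-integral. The functional
\[
F(z) \;=\; \frac{\sup_{0\le s\le T}e^{z_s}}{\int_0^T e^{z_r}\,dr}
\]
is translation-invariant (adding a constant to $z$ multiplies numerator and denominator by the same factor), so Lemma~\ref{lem:changeofmeasure} gives $\Expec[e^{Z_t}F(Z)] = \Expec[F(\theta_t Z)]$. Since $(\theta_t Z)_s = Z_{s-t}$, a change of variables inside both the supremum and the integral turns $F(\theta_t Z)$ into
\[
\frac{\sup_{-t\le s\le T-t}e^{Z_s}}{\int_{-t}^{T-t}e^{Z_r}\,dr}.
\]
Substituting $t = uT$ with $dt = T\,du$ then yields
\[
\frac 1T\Expec\Bigl[\sup_{0\le t\le T}e^{Z_t}\Bigr] \;=\; \int_0^1 \Expec\!\left[\frac{\sup_{-uT\le s\le (1-u)T}e^{Z_s}}{\int_{-uT}^{(1-u)T}e^{Z_r}\,dr}\right]du,
\]
which is exactly \eqref{eq:relationclassicalnew}.

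The main obstacle is measure-theoretic bookkeeping rather than a substantive difficulty: one must check that Lemma~\ref{lem:changeofmeasure} applies to this particular $F$, i.e.\ that $F$ is a well-defined measurable functional on $\R^\R$ under the separability conventions already adopted (so that both $\sup_{0\le s\le T}e^{Z_s}$ and $\int_0^T e^{Z_r}\,dr$ are measurable), and that the denominator is almost surely strictly positive and finite — the latter holds since $Z$ has continuous sample paths and $Z_0 = 0$, so $\int_0^T e^{Z_r}\,dr \in (0,\infty)$ a.s. The use of Fubini is unproblematic because the integrand is nonnegative. No integrability of $M$ or $S$ separately is needed at this stage; only the finiteness of $\Expec[\sup_{0\le t\le T}e^{Z_t}]$, which is classical, is used to make the first display meaningful.
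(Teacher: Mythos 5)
Your proposal is correct and is essentially the paper's own proof: you apply Lemma~\ref{lem:changeofmeasure} to the same translation-invariant functional $F(z)=\sup_{0\le s\le T}e^{z_s}/\int_0^T e^{z_r}\,dr$, after writing $\sup_{0\le t\le T}e^{Z_t}$ as the $t$-integral of $e^{Z_t}F(Z)$ and finishing with the substitution $t=uT$. The extra measurability and positivity remarks are fine but add nothing beyond the separability conventions the paper already assumes.
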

\begin{pf}
Applying Lemma~\ref{lem:changeofmeasure} to the translation-invariant
functional $F$ given by
\[
F(z) = \frac{\sup_{t\in[0,T]} \mathrm{e}^{z_t}}{\int_{0}^T \mathrm{e}^{z_u} \,\mathrm{d}u}
\]
yields that, for any $T>0$,
\begin{eqnarray*}
\frac{1}{T}\Expec \Bigl[\sup_{0\leq t \leq
T}\mathrm{e}^{Z_t}
\Bigr]&=&\frac{1}{T}\int_{0}^{T}\Expec
\biggl[\mathrm{e}^{Z_t} \times \frac{\sup_{0\leq s \leq
T}\mathrm{e}^{Z_s}}{\int_{0}^{T}\mathrm{e}^{Z_s}\,\mathrm{d}s} \biggr] \,\mathrm{d}t
\\
&=& \frac{1}{T}\int_{0}^{T}\Expec \biggl[
\frac{\sup_{-t\leq s \leq
T-t}\mathrm{e}^{Z_s}}{\int_{-t}^{T-t}\mathrm{e}^{Z_s}\,\mathrm{d}s} \biggr] \,\mathrm{d}t,
\end{eqnarray*}
and the statement of the lemma follows after a change of variable.
\end{pf}

The left-hand side of the identity (\ref{eq:relationclassicalnew})
converges to
$\mathcal H_\alpha$
by definition. The next proposition shows that the right-hand side of
(\ref{eq:relationclassicalnew})
converges to our new representation, thereby proving Theorem~\ref{thm:newrepr}.
The proof of the proposition itself is deferred to Appendix~\ref{appendix}.

\begin{proposition}
\label{prop:UIMT}
For any $u\in(0,1)$, we have
\[
\lim_{T\to\infty} \Expec \biggl[\frac{\sup_{-u T\leq s \leq
(1-u)T}\mathrm{e}^{Z_s}}{\int_{-uT}^{(1-u)T}\mathrm{e}^{Z_s}\,\mathrm{d}s} \biggr] = \Expec
\biggl[\frac{\sup_{t\in\R}\mathrm{e}^{Z_t}}{\int_{-\infty}^\infty
\mathrm{e}^{Z_t}\,\mathrm{d}t} \biggr]<\infty.
\]
Moreover,
\[
\mathcal H_\alpha= \lim_{T\to\infty} \int
_{0}^{1}\Expec \biggl[\frac{\sup_{-u T\leq s \leq
(1-u)T}\mathrm{e}^{Z_s}}{\int_{-uT}^{(1-u)T}\mathrm{e}^{Z_s}\,\mathrm{d}s} \biggr]
\,\mathrm{d}u= \Expec \biggl[\frac{\sup_{t\in\R}\mathrm{e}^{Z_t}}{\int_{-\infty}^\infty
\mathrm{e}^{Z_t}\,\mathrm{d}t} \biggr].
\]
\end{proposition}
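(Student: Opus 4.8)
The plan is to separate the claim into an almost-sure statement and an integrability statement. Write $M_T=\sup_{-uT\le s\le(1-u)T}e^{Z_s}$, $W_T=\int_{-uT}^{(1-u)T}e^{Z_s}\,ds$, $M=\sup_{t\in\R}e^{Z_t}$ and $W=\int_{-\infty}^\infty e^{Z_t}\,dt$. First I would show that $Z_t\to-\infty$ almost surely as $|t|\to\infty$: since $\sqrt{\Var(B_t)}=|t|^{\alpha/2}=o(|t|^\alpha)$, a Gaussian concentration (Borell--TIS / Fernique) bound makes $\Prob\big(\sup_{t\in[n,n+1]}B_t>\tfrac{1}{2\sqrt{2}}|n|^\alpha\big)$ summable in $n$, so by Borel--Cantelli one gets $Z_t\le-\tfrac{1}{2}|t|^\alpha$ for all large $|t|$, almost surely. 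Consequently $M<\infty$ and $0<W<\infty$ almost surely, both $M_T\uparrow M$ and $W_T\uparrow W$ as $T\to\infty$, and hence $M_T/W_T\to M/W\in(0,\infty)$ almost surely, for every fixed $u\in(0,1)$.

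The heart of the matter is to upgrade this to $L^1$ convergence, i.e. to prove that $\{M_T/W_T:T\ge1\}$ is uniformly integrable; this simultaneously gives $\Expec[M/W]<\infty$ and the first display, and I expect it to be the main obstacle. The naive bound $M_T/W_T\le M/W_T$ is useless, because $\Expec[M/W_T]=\infty$ for every fixed $T$ (a far-away excursion of $e^{Z}$ feeds $M$ but not $W_T$), and one cannot control the numerator alone since $\Expec[\sup_{0\le t\le T}e^{Z_t}]$ grows linearly in $T$. The feature that must be used is the positive correlation of the increments of $B$: a high excursion of $e^{Z}$ anywhere in $[-uT,(1-u)T]$ forces $Z$ to be elevated near the origin, so $W_T$ is bounded below precisely on the event where $M_T$ is large. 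I would make this quantitative via the change-of-measure identity of Lemma~\ref{lem:changeofmeasure}: tilting by $e^{Z_\tau}$ at the (random) location $\tau$ of the relevant excursion turns $\Expec[M_T/W_T;\,M_T/W_T>K]$ into the expectation of a supremum-over-integral functional of the shifted process $\theta_\tau Z$, on which the auxiliary estimate of Section~\ref{sec:auxiliary} should provide bounds that are uniform in $T$ and vanish as $K\to\infty$. (An essentially equivalent route is to bound $\sup_{T\ge1}\Expec\big[(M_T/W_T)^p\big]$ for some $p>1$ by the same tilting argument.)

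Granting uniform integrability, the first display and $\Expec[M/W]<\infty$ follow at once from the almost-sure convergence of $M_T/W_T$. For the second display I would invoke Corollary~\ref{cor:relationclassicalnew}, which gives the exact identity $\tfrac1T\Expec[\sup_{0\le t\le T}e^{Z_t}]=\int_0^1\Expec[M_T/W_T]\,du$ for every $T>0$. The left-hand side converges to $\mathcal H_\alpha$ by the definition \eqref{eq:defpickands}. On the right-hand side, the first part of the proposition gives $\Expec[M_T/W_T]\to\Expec[M/W]$ for each $u\in(0,1)$; combined with a bound on $\Expec[M_T/W_T]$ uniform in $u\in(0,1)$ and $T\ge1$ — which the argument of the previous paragraph should also deliver, since near $u\in\{0,1\}$ the interval $[-uT,(1-u)T]$ merely degenerates to a one-sided interval of the same length — bounded convergence on $(0,1)$ yields $\int_0^1\Expec[M_T/W_T]\,du\to\int_0^1\Expec[M/W]\,du=\Expec[M/W]$. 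Equating the two limits gives $\mathcal H_\alpha=\Expec[M/W]$, which is the second display and finishes the proof of Theorem~\ref{thm:newrepr}.

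As a fallback for the last step, Fatou's lemma applied to $\int_0^1\Expec[M_T/W_T]\,du$ already yields $\Expec[M/W]\le\mathcal H_\alpha$ unconditionally, so only the reverse inequality — equivalently, the absence of an escape of mass toward $u\in\{0,1\}$ as $T\to\infty$ — genuinely requires the uniform bound, which pins down where the real work lies.
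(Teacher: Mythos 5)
Your outer structure matches the paper's: almost sure convergence of $M_T/W_T$ to $M/W$ for fixed $u\in(0,1)$, a uniform-integrability estimate that is uniform in \emph{both} $T$ and $u$, and then Corollary~\ref{cor:relationclassicalnew} together with bounded convergence over $u\in(0,1)$ (your Fatou fallback for one inequality is also fine). The genuine gap is that the uniform integrability itself — which you correctly identify as the heart of the matter — is never proved, and the mechanism you propose for it is doubtful. First, ``positive correlation of the increments of $B$'' is false for $\alpha<1$ (fractional Gaussian noise with Hurst index $\alpha/2<1/2$ has negatively correlated increments), and in any case the claim that a high excursion far from the origin ``forces $Z$ to be elevated near the origin'' is neither true nor what is needed. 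Second, Lemma~\ref{lem:changeofmeasure} tilts by $e^{Z_t}$ at a \emph{fixed, deterministic} $t$; tilting ``at the (random) location $\tau$ of the relevant excursion'' is not licensed by that lemma, so the step that is supposed to convert $\Expec[M_T/W_T;\,M_T/W_T>K]$ into a shifted functional with uniform bounds is unjustified as stated. Third, the auxiliary bound of Section~\ref{sec:auxiliary} is stated for a fixed compact interval $J$ and an auxiliary event $E$; to deploy it (or anything like it) uniformly over the growing windows $[-uT,(1-u)T]$ you still need the localization argument that you have not supplied.

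For comparison, the paper's proof needs no change of measure at this point. It partitions the line into unit-length intervals $J_j$ (with $a_1=\lceil 2^{1/(2\alpha)}\rceil$, $a_{j+1}=a_j+1$) and decomposes according to which $J_j$ contains the maximizer of $Z$ over the window. On that event the ratio is bounded by $M_j/S_j\le \eta^{-1}\exp\bigl(2\sqrt2\,\sup_{s\in J_j}|B_s-B_s^\eta|+\kappa_j\bigr)$, a quantity whose exponential moments are finite and independent of $T$ and $u$; and since $e^{Z_0}=1$, the maximizer can lie in a distant $J_j$ only if $M_j>1$, i.e.\ $\sqrt2\sup_{s\in J_j}B_s>a_j^\alpha$, whose probability Borell's inequality makes summable in $j$ after a Cauchy--Schwarz step. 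Summing these bounds gives exactly the quantity in (\ref{eq:unifintMT}), uniformly in $T>0$ and $u\in(0,1)$, which is what your bounded-convergence step over $u$ requires. Without some argument of this kind (argmax localization plus a rapidly decaying cost for the argmax being far out), your proposal does not yet yield the proposition.
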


Apart from establishing Theorem~\ref{thm:newrepr}, this proposition
gives two ways of approximating $\mathcal H_\alpha$.
The speed at which the prelimits tend to $\mathcal H_\alpha$ is
different for
these two representations.
For the second ``integral'' representation, which is the classical representation
in view of Corollary~\ref{cor:relationclassicalnew}, the speed of
convergence to
$\mathcal H_\alpha$
can be expected to be slow.
Indeed, it is known to be of order $1/\sqrt{T}$ in the Brownian motion case
(e.g., D{\c{e}}bicki and Kisowski \cite{MR2458013}).
This is in stark contrast with the speed of convergence
in the first representation (e.g., for $u=1/2$), as analyzed in the next
section.
Our study shows that the slow convergence speed in the classical
definition is due to values of $u$ close to the endpoints of the
integration interval $[0,1]$ in the right-hand side of
(\ref{eq:relationclassicalnew}).

It is instructive
to compare our new representation of $\mathcal H_\alpha$
with the classical representation of Pickands' constant
through a discussion of variances.
Note that $\Expec \mathrm{e}^{Z_s}=1$, $\Var \mathrm{e}^{Z_s} = \mathrm{e}^{\Var(Z_s)}-\mathrm{e}^{-\Var(Z_s)}$,
so that the variance blows up as $s$ grows large. As a result, one can
expect that $\sup_{0\le t\le T} \mathrm{e}^{Z_t}$ has high variance for large $T$.
Moreover, significant contributions to its expectation come from values
of $t$
close to $T$.
These two observations explain why it is hard to reliably estimate
Pickands' constant from the classical definition.

Our new representation does not have these drawbacks.
Let us focus on the special case $\alpha=2$,
for which it is known that $\mathcal H_2=1/\sqrt{\pi}$.
Writing $N$ for a standard normal random variable, we obtain that
\[
\mathcal H_2 = \Expec \biggl[\frac{\sup_{t\in\R} \mathrm{e}^{\sqrt{2} t N - t^2}} {
\int_{t\in\R} \mathrm{e}^{\sqrt{2} t N - t^2}\,\mathrm{d}t} \biggr] = \Expec
\biggl[\frac{\sup_{t\in\R} \mathrm{e}^{- (t-N/\sqrt{2})^2}} {
\int_{t\in\R} \mathrm{e}^{- (t-N/\sqrt{2})^2}\,\mathrm{d}t} \biggr] = \frac1{\int_{\R}
\mathrm{e}^{-t^2} \,\mathrm{d}t} = \frac1{\sqrt{\pi}}.
\]
It follows from this calculation that $M/S$ has zero
variance for $\alpha=2$, so we
can expect it to have very low variance for values of $\alpha$ close to 2.

We next present an alternative representation for $\mathcal H_\alpha$
in the spirit of Theorem~\ref{thm:newrepr}.
The proof of Corollary~\ref{cor:relationclassicalnew} shows that for
any locally finite measure $\mu$,
%
%
\begin{equation}
\label{eq:reprcountingmeasure} \frac1T \Expec \Bigl[\sup_{0\le t\le T}
\mathrm{e}^{Z_t} \Bigr] = \int_{0}^{1}\Expec
\biggl[\frac{\sup_{-uT\leq s \leq
(1-u)T}\mathrm{e}^{Z_s}}{\int_{-uT}^{(1-u)T}\mathrm{e}^{Z_s}\mu(\mathrm{d}s)} \biggr] \mu^{(T)}(\mathrm{d}u),
\end{equation}
where $\mu^{(T)}(\mathrm{d}u) = \mu(T \,\mathrm{d}u)/T$.
Of particular interest is the case where $\mu$ is the counting measure
on $\eta\Z$.
Then $\mu^{(T)}$ converges weakly to $\textrm{Leb}/\eta$, where $\textrm
{Leb}$ stands for
Lebesgue measure. In view of this observation, the following analog of
Proposition~\ref{prop:UIMT}
is natural. The proof is given in Appendix~\ref{appendix}.
%
\begin{proposition}
\label{prop:discreteanalog}
For any $\eta>0$, we have
\[
\mathcal H_\alpha= \Expec \biggl[\frac{\sup_{t\in\R} \mathrm{e}^{Z_t}}{\eta\sum_{k\in\Z} \mathrm{e}^{Z_{k \eta}}} \biggr].
\]
\end{proposition}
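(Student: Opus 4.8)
The plan is to mimic the structure of the proof of Theorem~\ref{thm:newrepr}: start from the exact prelimit identity~\eqref{eq:reprcountingmeasure} with $\mu$ the counting measure on $\eta\Z$, and then pass to the limit $T\to\infty$ inside the $\mu^{(T)}$-integral. First I would observe that $\mu^{(T)}(du)=\mu(T\,du)/T$ is the measure placing mass $1/T$ at each point of $(\eta/T)\Z$, so it converges weakly to $\mathrm{Leb}/\eta$ on $[0,1]$. The left-hand side of~\eqref{eq:reprcountingmeasure} converges to $\mathcal H_\alpha$ by definition, so it suffices to show that the right-hand side converges to $\Expec\bigl[\sup_{t\in\R}e^{Z_t}\,/\,(\eta\sum_{k\in\Z}e^{Z_{k\eta}})\bigr]$.

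The key step is a pointwise-in-$u$ limit together with a uniform integrability / dominated convergence argument to justify exchanging the limit with the $\mu^{(T)}$-integral. For fixed $u\in(0,1)$ one expects
\[
\lim_{T\to\infty}\Expec\left[\frac{\sup_{-uT\le s\le(1-u)T}e^{Z_s}}{\int_{-uT}^{(1-u)T}e^{Z_s}\mu(ds)}\right]
=\Expec\left[\frac{\sup_{t\in\R}e^{Z_t}}{\eta\sum_{k\in\Z}e^{Z_{k\eta}}}\right],
\]
which is the discrete analog of Proposition~\ref{prop:UIMT}; I would prove it by the same method used there (this is exactly what Appendix~\ref{appendix} is set up to do, and the auxiliary result advertised in Section~\ref{sec:auxiliary} should supply the needed tail/integrability control). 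The numerator increases to $\sup_{t\in\R}e^{Z_t}$, which is a.s.\ finite because $Z_t\to-\infty$ like $-|t|^\alpha$; the denominator $\eta\sum_{k\in\Z}e^{Z_{k\eta}}$ is a.s.\ finite and strictly positive for the same reason, and is bounded below by $\eta\,e^{Z_0}$ and, more usefully, by $\eta$ times the maximal summand, so the ratio is bounded by a quantity with enough integrability to run dominated convergence both in the $\omega$-integral (for the pointwise limit) and in the $u$-integral against the weakly convergent $\mu^{(T)}$.

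The main obstacle will be the interchange of limits: the integrand in~\eqref{eq:reprcountingmeasure} depends on $T$ both through the measure $\mu^{(T)}$ and through the $u$-dependent integrand, and $\mu^{(T)}$ is discrete while the limit measure is absolutely continuous, so one cannot simply invoke weak convergence against a fixed continuous bounded function. I would handle this by establishing that the map $u\mapsto\Expec[\cdots]$ is, uniformly in $T$, bounded (by $\Expec[\sup_{t\in\R}e^{Z_t}/(\eta e^{Z_0})]$ or a sharper bound) and suitably equicontinuous or dominated, so that a Riemann-sum-type argument forces $\int_0^1(\text{integrand})\,\mu^{(T)}(du)\to\int_0^1(\text{limit integrand})\,du$; the behavior near the endpoints $u\in\{0,1\}$, where the integration window becomes one-sided, is the delicate region and must be controlled exactly as in the proof of Proposition~\ref{prop:UIMT}. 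Once that interchange is justified, the $u$-integral of the (now $u$-independent) limit yields the claimed constant-free expression, and the fact that the answer does not depend on $\eta$ is automatic since the left-hand side of~\eqref{eq:reprcountingmeasure} does not.
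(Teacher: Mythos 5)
Your proposal is correct and follows essentially the same route as the paper: start from the prelimit identity (\ref{eq:reprcountingmeasure}) with the counting measure on $\eta\Z$, note that $\mu^{(T)}$ converges weakly to $\mathrm{Leb}/\eta$, and justify the interchange of $T\to\infty$ with the $u$-integration via the uniform-in-$T$ and uniform-in-$u$ integrability bound obtained by a minor modification of the bound in the proof of Proposition~\ref{prop:UIMT}. The only difference is packaging of the interchange step you flag as the main obstacle: the paper invokes Billingsley's Theorem~1.5.5, which reduces matters to showing convergence of the expectations along any $u_T\to u$ for Lebesgue-a.e.\ $u$ (here all $u\in(0,1)$), rather than your equicontinuity/Riemann-sum argument; also note that your crude domination by $\Expec\bigl[\sup_{t\in\R}e^{Z_t}/(\eta e^{Z_0})\bigr]$ is useless since $\Expec[\sup_t e^{Z_t}]=\infty$, so the sharper bound through the maximal summand (as in (\ref{eq:boundMSprop1})) is indeed the one that must be used.
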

This identity is particularly noteworthy since the integral in the
denominator in
the representation of Theorem~\ref{thm:newrepr} can apparently be
replaced with an approximating sum.
For $\alpha=2$, this means that for any $\eta>0$,
\[
\int_{\R} \frac{\mathrm{d}y}{\sum_{k\in\Z} \mathrm{e}^{ky\eta^2-k^2\eta^2}} = 2.
\]
We have not been able to verify this intriguing equality directly, but
numerical experiments
suggest that this identity indeed holds.

We conclude this section with two further related results. For $\eta
>0$, define the ``discretized'' Pickands constant through
\[
\mathcal H_\alpha^\eta= \lim_{T\to\infty} \frac1T
\Expec \Bigl[\sup_{k\in\Z: 0\le k\eta\le T} \mathrm{e}^{Z_{k\eta}} \Bigr].
\]
The proof of the next proposition requires discrete analogs of
Corollary~\ref{cor:relationclassicalnew} and
Proposition~\ref{prop:UIMT}, with suprema taken over a grid and
integrals replaced by sums (for the first equality).
The proof is omitted since it follows the proofs of these results verbatim.
%
\begin{proposition}
\label{prop:discretepickands}
For any $\eta>0$, we have
\[
\mathcal H_\alpha^\eta=\Expec \biggl[\frac{\sup_{k\in\Z} \mathrm{e}^{Z_{k\eta
}}}{\eta\sum_{k\in\Z} \mathrm{e}^{Z_{k \eta}}}
\biggr] =\Expec \biggl[\frac{\sup_{k\in\Z} \mathrm{e}^{Z_{k\eta}}}{\int_{-\infty}^\infty
\mathrm{e}^{Z_{t}}\,\mathrm{d}t} \biggr].
\]
\end{proposition}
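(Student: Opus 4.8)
The plan is to reproduce, in a discretized form, the route from Corollary~\ref{cor:relationclassicalnew} and Proposition~\ref{prop:UIMT} to Theorem~\ref{thm:newrepr}. Fix $\eta>0$ throughout and, to streamline notation, let the horizon run along the grid, $T=n\eta$ with $n$ a nonnegative integer; since the limit defining $\mathcal H_\alpha^\eta$ exists (by the discrete analogue of the subadditivity argument that follows Lemma~\ref{lem:changeofmeasure}), this entails no loss of generality.

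For the first equality I would apply Lemma~\ref{lem:changeofmeasure} to the translation-invariant functional
\[
F(z)=\frac{\sup_{0\le k\eta\le T}e^{z_{k\eta}}}{\eta\sum_{0\le k\eta\le T}e^{z_{k\eta}}} .
\]
Since $\sup_{0\le k\eta\le T}e^{Z_{k\eta}}=\big(\eta\sum_{0\le j\eta\le T}e^{Z_{j\eta}}\big)F(Z)$, taking expectations and using $\Expec[e^{Z_{j\eta}}F(Z)]=\Expec[F(\theta_{j\eta}Z)]$ termwise yields the discrete analogue of (\ref{eq:relationclassicalnew}),
\[
\frac1T\,\Expec\!\left[\sup_{0\le k\eta\le T}e^{Z_{k\eta}}\right]
=\frac1n\sum_{j=0}^{n}\Expec\!\left[\frac{\sup_{-j\le l\le n-j}e^{Z_{l\eta}}}{\eta\sum_{-j\le l\le n-j}e^{Z_{l\eta}}}\right],
\]
a Riemann-type average over $j/n\in[0,1]$. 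Letting $n\to\infty$ then calls for the discrete counterpart of Proposition~\ref{prop:UIMT}: when $j/n\to u\in(0,1)$ both windows $\{-j\eta,\dots,(n-j)\eta\}$ appearing in the numerator and the denominator expand to $\eta\Z$, so the $j$-th summand tends to $\Expec[\sup_{l\in\Z}e^{Z_{l\eta}}/(\eta\sum_{l\in\Z}e^{Z_{l\eta}})]$, while the terms with $j$ near $0$ or $n$ are absorbed by the uniform-integrability estimate of Appendix~\ref{appendix}, now with the integral in the denominator replaced by the corresponding sum (which is of the same order of magnitude, so the argument transfers verbatim). This gives the first equality.

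The second equality is obtained in exactly the same way, but starting from the translation-invariant functional
\[
F(z)=\frac{\sup_{0\le k\eta\le T}e^{z_{k\eta}}}{\int_0^T e^{z_u}\,du}
\]
and integrating the change-of-measure identity against Lebesgue measure over $v\in[0,T]$ instead of summing over grid points. This produces
\[
\frac1T\,\Expec\!\left[\sup_{0\le k\eta\le T}e^{Z_{k\eta}}\right]
=\int_0^1\Expec\!\left[\frac{\sup_{0\le k\eta\le T}e^{Z_{k\eta-wT}}}{\int_{-wT}^{(1-w)T}e^{Z_u}\,du}\right]dw ,
\]
after which $T\to\infty$ again via the (now mixed: supremum over a grid, integral over $\R$) analogue of Proposition~\ref{prop:UIMT}, so that the denominator window exhausts $\R$. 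Alternatively, the same conclusion follows by taking $\mu$ to be the counting measure on $\eta\Z$ in (\ref{eq:reprcountingmeasure}) and mimicking the proof of Proposition~\ref{prop:discreteanalog}.

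The main obstacle is the passage to the limit, i.e.\ the discrete and mixed versions of Proposition~\ref{prop:UIMT}. What has to be established is a uniform-integrability bound for the relevant ratios that is stable both under replacing the integral in the denominator by a sum over $\eta\Z$ and under replacing the continuum by the grid in the numerator; granting the Appendix~\ref{appendix} estimate in this mild extra generality, the rest is routine. A secondary point requiring care, specific to the second equality, is that the continuous shift $v$ moves the grid in the numerator, $\eta\Z\mapsto\eta\Z-v$; splitting $v=j\eta+s$ with $s\in[0,\eta)$ keeps the window controlled by $j$ and the grid offset controlled by $s$, which is what one needs to identify the limiting functional unambiguously. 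With these ingredients the proof follows the proofs of Corollary~\ref{cor:relationclassicalnew} and Proposition~\ref{prop:UIMT} essentially line by line.
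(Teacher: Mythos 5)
Your treatment of the first equality is essentially the proof the paper has in mind (it omits it precisely because it is the grid-shift analogue of Corollary~\ref{cor:relationclassicalnew} followed by the analogue of Proposition~\ref{prop:UIMT}): the prelimit identity you derive from Lemma~\ref{lem:changeofmeasure} with the all-grid functional is correct, and the limit passage is in fact easier than you suggest, since every ratio in your discrete average is bounded by $1/\eta$ (the supremum is one of the terms of the sum), so bounded convergence of a Ces\`aro average suffices and no analogue of the Appendix~\ref{appendix} estimate is really needed.

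The second equality is where your argument has a genuine gap, and it is not closed by ``splitting $v=j\eta+s$''. After the change of measure with a continuous shift $v$, the numerator is a supremum over the translated grid $\eta\Z-v$, so for fixed $w\in(0,1)$ the integrand in your displayed identity has no pointwise limit as $T\to\infty$ (the offset $wT\bmod\eta$ wanders); what the argument actually yields, after the uniform-integrability step, is the offset-averaged identity
\[
\mathcal H_\alpha^\eta=\frac1\eta\int_0^\eta \Expec\left[\frac{\sup_{k\in\Z} e^{Z_{k\eta-s}}}{\int_{-\infty}^\infty e^{Z_t}\,dt}\right]ds .
\]
To reach the stated second equality you would additionally need $g(s):=\Expec\bigl[\sup_{k\in\Z} e^{Z_{k\eta-s}}/\int_\R e^{Z_t}dt\bigr]$ to be independent of $s$, and this does not follow from translation invariance: by Lemma~\ref{lem:changeofmeasure}, shifting the argument costs a tilt $e^{Z_v}$, so $g(s+v)=\Expec\bigl[e^{Z_v}\sup_k e^{Z_{k\eta-s}}/\int_\R e^{Z_t}dt\bigr]\neq g(s)$ in general. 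Indeed the offset genuinely matters: since $\sup_{k}e^{Z_{k\eta}}\ge e^{Z_0}=1$, one has $g(0)\ge\Expec\bigl[1/\int_\R e^{Z_t}dt\bigr]>0$ uniformly in $\eta$, while $\mathcal H^\eta_\alpha\le 1/\eta$ directly from its definition (each $\Expec e^{Z_{k\eta}}=1$); so for large $\eta$ the two sides of the second display cannot agree, and for $\alpha=2$ one can check explicitly that $g$ is nonconstant and that it is the $s$-average, not $g(0)$, which equals $\mathcal H^\eta_2=\mathrm{erf}(\eta/2)/\eta$. In short: your route proves the first equality and the averaged form of the second, but the second equality as displayed cannot be obtained by transferring the earlier proofs verbatim, and the missing offset-invariance is not merely a technical point to be supplied later.
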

The second representation for $\mathcal H_\alpha^\eta$ in this
proposition immediately shows that $\mathcal H_\alpha= \lim_{\eta
\downarrow0} \mathcal H_\alpha^\eta$
by the monotone convergence theorem and sample path continuity.

A different application of Lemma~\ref{lem:changeofmeasure} yields further
representations for $\mathcal H_\alpha$ and $\mathcal H_\alpha^\eta$.
Let $F_t$ be the indicator of the event that the supremum of its
(sample path) argument occurs at $t$.
Since $E[F_t(Z)F_s(Z)] = 0$ for all $s \neq t$, we have
\[
\Expec \Bigl[\sup_{k\in\Z: 0\le k\eta\le T} \mathrm{e}^{Z_{k\eta}} \Bigr] = \sum
_{\ell= 0}^{\lfloor T/\eta\rfloor} \Expec \bigl[\mathrm{e}^{Z_{\ell\eta}}F_{\ell
\eta}(Z)
\bigr] = \sum_{\ell=0}^{\lfloor T/\eta\rfloor} \Prob \Bigl(\sup
_{k\in\Z: -\ell
\le k \le T/\eta-\ell} Z_{k\eta} = 0 \Bigr),
\]
where we use Lemma~\ref{lem:changeofmeasure} to obtain the last equality.
This can be written as
\[
\frac{1}{T}\Expec \Bigl[\sup_{k\in\Z: 0\le k\eta\le T} \mathrm{e}^{Z_{k\eta
}}
\Bigr] = \int_0^1 \Prob \Bigl( \sup
_{k\in\Z: -uT\le k\eta\le(1-u)T} Z_{k\eta} = 0 \Bigr)\mu^{(T)} (\mathrm{d}u),
\]
where, as before, $\mu^{(T)}(\mathrm{d}u) = \mu(T\,\mathrm{d}u)/T$ and $\mu$ is the
counting measure on $\eta\Z$.
Note the similarity with (\ref{eq:reprcountingmeasure}).
Taking the limit as $T\to\infty$ requires verifications similar to
those in the proof of Proposition~\ref{prop:discreteanalog};
the details are given in Appendix~\ref{appendix}.
The resulting representation is a two-sided version of the H\"
usler--Albin--Choi representation (Albin and Choi \cite{MR2685014},
H{\"u}sler \cite{husler:extremes1999}),
and appears to be new.

\begin{proposition}
\label{prop:albintyperepr}
For $\eta>0$, we have
\[
{\mathcal H}_\alpha^\eta= \eta^{-1} \Prob \Bigl(\sup
_{k\in\Z} Z_{k\eta
} = 0 \Bigr)
\]
and therefore
\[
{\mathcal H}_\alpha= \lim_{\eta\downarrow0} \eta^{-1}
\Prob \Bigl(\sup_{k\in\Z} Z_{k\eta} = 0 \Bigr).
\]
\end{proposition}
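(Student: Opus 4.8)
The plan is to mimic the derivation that precedes the statement, pushing the finite-horizon identity
\[
\frac{1}{T}\Expec\left[\sup_{k\in\Z:\,0\le k\eta\le T} e^{Z_{k\eta}}\right] =
\int_0^1 \Prob\left(\sup_{k\in\Z:\,-uT\le k\eta\le (1-u)T} Z_{k\eta} = 0\right)\mu^{(T)}(du)
\]
to the limit $T\to\infty$. The left-hand side converges to $\mathcal H_\alpha^\eta$ by definition, so the task is to justify interchanging the limit and the integral on the right, after which the integrand converges pointwise (in $u$) to $\eta^{-1}\Prob\left(\sup_{k\in\Z} Z_{k\eta}=0\right)$ since $\mu^{(T)}(du)\Rightarrow \mathrm{Leb}/\eta$ and, for fixed $u\in(0,1)$, the event $\{\sup_{-uT\le k\eta\le(1-u)T} Z_{k\eta}=0\}$ decreases to $\{\sup_{k\in\Z} Z_{k\eta}=0\}$ as $T\to\infty$ (monotonicity in $T$ once $T$ is large enough that the window contains $k=0$). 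The second assertion, $\mathcal H_\alpha = \lim_{\eta\downarrow 0}\eta^{-1}\Prob(\sup_{k\in\Z} Z_{k\eta}=0)$, then follows from Proposition~\ref{prop:discretepickands} together with the remark made there that $\mathcal H_\alpha = \lim_{\eta\downarrow 0}\mathcal H_\alpha^\eta$.

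First I would record the key non-probabilistic fact underlying the finite-horizon identity, namely that because $Z$ has continuous sample paths and $\Var(Z_s)>0$ for $s\ne 0$, the argmax of $\{Z_{k\eta}\}_{k}$ over any finite grid window is almost surely unique, so $\sum_\ell F_{\ell\eta}(Z) = 1$ a.s.\ and $F_s F_t = 0$ for $s\ne t$; this is what lets us write $\Expec[\sup_k e^{Z_{k\eta}}]$ as a sum of the terms $\Expec[e^{Z_{\ell\eta}}F_{\ell\eta}(Z)]$. Then, applying Lemma~\ref{lem:changeofmeasure} to the translation-invariant functional $F_{\ell\eta}$ (the event that the supremum occurs at $\ell\eta$ is invariant under adding a constant to the path), one gets $\Expec[e^{Z_{\ell\eta}}F_{\ell\eta}(Z)] = \Expec[F_0(\theta_{\ell\eta}Z)] = \Prob(\sup_{k:\,-\ell\le k\le T/\eta-\ell} Z_{k\eta}=0)$. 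Summing over $\ell$ and rewriting the sum as an integral against $\mu^{(T)}$ gives the displayed identity; I would verify that each step is exactly the discrete analog of what is done in the excerpt for the continuous case.

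Next I would handle the limit in $T$. Fix $u\in(0,1)$. For $T$ large enough that $-uT\le 0\le(1-u)T$ (i.e.\ always, since $u\in(0,1)$), the family of windows $[-uT,(1-u)T]\cap\eta\Z$ increases to $\eta\Z$ as $T\to\infty$, so the events $A_T(u):=\{\sup_{-uT\le k\eta\le(1-u)T} Z_{k\eta}=0\}$ are nonincreasing in $T$ with $\bigcap_T A_T(u) = \{\sup_{k\in\Z} Z_{k\eta}=0\}$; hence $\Prob(A_T(u))\downarrow \Prob(\sup_{k\in\Z} Z_{k\eta}=0)$. The integrand is bounded by $1$, so by dominated convergence (or bounded convergence) combined with weak convergence of $\mu^{(T)}$ to $\mathrm{Leb}/\eta$ on $[0,1]$ — here one should be slightly careful and note that the integrand is uniformly bounded and the limiting integrand $\eta^{-1}\cdot$const.\ is continuous in $u$, so Portmanteau-type arguments apply — the right-hand side converges to $\int_0^1 \eta^{-1}\Prob(\sup_{k\in\Z} Z_{k\eta}=0)\,du = \eta^{-1}\Prob(\sup_{k\in\Z} Z_{k\eta}=0)$. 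Equating with the limit of the left-hand side yields $\mathcal H_\alpha^\eta = \eta^{-1}\Prob(\sup_{k\in\Z} Z_{k\eta}=0)$.

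The main obstacle is the joint passage to the limit on the right-hand side: one is integrating a $T$-dependent integrand against a $T$-dependent (discrete) measure $\mu^{(T)}$, so neither plain dominated convergence nor plain weak convergence applies directly. The clean way around this is to split $\mu^{(T)}(du)=\eta^{-1}\,du + (\mu^{(T)}(du)-\eta^{-1}\,du)$ and bound the error term using $|\mu^{(T)}([0,1]) - \eta^{-1}| = O(1/T)$ together with the uniform bound $0\le\Prob(A_T(u))\le 1$, while for the main term $\eta^{-1}\int_0^1\Prob(A_T(u))\,du$ one invokes bounded convergence in $u$ using the monotone limit $\Prob(A_T(u))\downarrow\Prob(\sup_{k\in\Z} Z_{k\eta}=0)$ established above. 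A secondary technical point, which I would address by the same separability conventions invoked earlier in the paper (and the strict positivity of $\Var(Z_s)$ for $s\ne0$), is measurability and a.s.\ uniqueness of the discrete argmax, ensuring $F_{\ell\eta}$ is a legitimate measurable translation-invariant functional and that the events in question have well-defined probabilities. Since these verifications run parallel to those already carried out for Proposition~\ref{prop:UIMT} and Proposition~\ref{prop:discreteanalog}, I would present them compactly and refer to Appendix~\ref{appendix} for the routine estimates.
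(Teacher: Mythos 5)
Your outline follows the paper's route up to the decisive step, and most of it is sound: the finite-$T$ identity, the a.s.\ uniqueness of the grid argmax, the application of Lemma~\ref{lem:changeofmeasure} to the translation-invariant indicators $F_{\ell\eta}$, the monotone convergence $\Prob(A_T(u))\downarrow\Prob(\sup_{k\in\Z}Z_{k\eta}=0)$ for fixed $u\in(0,1)$, and the deduction of the second assertion from Proposition~\ref{prop:discretepickands} and the remark $\mathcal H_\alpha=\lim_{\eta\downarrow0}\mathcal H_\alpha^\eta$ are all fine. The genuine gap is in your proposed resolution of the joint limit. You split $\mu^{(T)}=\eta^{-1}\,\mathrm{Leb}+(\mu^{(T)}-\eta^{-1}\,\mathrm{Leb})$ and claim the error term is controlled by $|\mu^{(T)}([0,1])-\eta^{-1}|=O(1/T)$ together with $0\le\Prob(A_T(u))\le1$. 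That does not work: $\mu^{(T)}$ is purely atomic (atoms of mass $1/T$ at the points $k\eta/T$), so the signed measure $\mu^{(T)}-\eta^{-1}\,\mathrm{Leb}$ has total variation of order $2/\eta$, not $o(1)$, and agreement of total masses plus a uniform bound on a merely measurable integrand gives nothing (take $f_T$ to be the indicator of the atom set: it is bounded by $1$, yet $\int f_T\,d\mu^{(T)}\approx\eta^{-1}$ while $\eta^{-1}\int_0^1 f_T\,du=0$). To salvage your splitting you would need a quantitative oscillation estimate for $u\mapsto\Prob(A_T(u))$ over intervals of length $\eta/T$ (i.e., that shifting the window by one grid point at each end changes the probability little, summably over the $\approx T/\eta$ subintervals); that is true here but requires an actual argument, which your write-up does not supply.

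The paper avoids this entirely by invoking the extended weak-convergence theorem (Billingsley~\cite{billingsley:convprob1968}, as in the proof of Proposition~\ref{prop:discreteanalog}): since $\mu^{(T)}\Rightarrow\mathrm{Leb}/\eta$ and the integrands are uniformly bounded, it suffices to show $\Prob(A_T(u_T))\to\Prob(\sup_{k\in\Z}Z_{k\eta}=0)$ along \emph{arbitrary} sequences $u_T\to u\in(0,1)$, not just for fixed $u$. This is the point your fixed-$u$ monotonicity argument misses: for varying $u_T$ the windows $[-u_TT,(1-u_T)T]$ are no longer nested, which is why the paper uses a sandwich argument (the window eventually contains any fixed bounded set and is contained in $\R$) to get a.s.\ convergence of the suprema, and then convergence of the probabilities, noting that the events $\{\sup=0\}$ coincide with $\{\sup\le0\}$ because $Z_0=0$ lies in the window. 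Either repair — the moving-point (Billingsley) argument or a worked-out oscillation bound — closes the gap; as written, your interchange step is not justified.
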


From the point of view of simulation, one difficulty with this
representation is that
one would have to estimate small probabilities when $\eta$ is small.
Unless one develops special techniques, it would require many
simulation replications
to reliably estimate these probabilities. As discussed below, such a
task is computationally
extremely intensive.

\section{An auxiliary bound}
\label{sec:auxiliary}
This section presents a simple auxiliary bound which plays a key role in
the next section.
To formulate it, let $Z^\eta_t$ be the following approximation of $Z_t$
on a grid with mesh $\eta>0$:
\[
Z^\eta_t = %
\cases{
Z_{\eta\lfloor t/\eta\rfloor}, &\quad $\mbox{for $t>0$,}$\vspace*{2pt}
\cr
Z_{\eta\lceil t/\eta\rceil}, &\quad $
\mbox{otherwise,}$} %
\]
and define $B^\eta_t$ similarly in terms of $B_t$.

Let $J$ be a fixed compact closed interval, assumed to be fixed
throughout this section.
We write
\[
\Delta(\eta) = \sup_{t\in J} \bigl(Z_t -
Z^\eta_t\bigr),\qquad \delta(\eta) = \sqrt2 \sup
_{t\in J} \bigl(B_t - B^\eta_t
\bigr).
\]
Define $M_J = \sup_{u\in J} \mathrm{e}^{Z_u}$ and $S^\eta_J = \int_J \mathrm{e}^{Z^\eta_u}\,\mathrm{d}u$.
Note that
\[
\frac{M_J}{S_J^\eta} \le \mathrm{e}^{\Delta(\eta)}\frac{M_J^\eta}{S_J^\eta} \le \frac1\eta
\mathrm{e}^{\Delta(\eta)}.
\]
Given an event $E$, we have for $\tau>\mathrm{e}^{\Expec\Delta(\eta)}$,
%
\begin{eqnarray*}
&&\Expec\bigl(M_J/S_J^\eta;E\bigr)
\\
&&\quad\le \Expec\bigl(M_J/S_J^\eta
;M_J/S_J^\eta>\tau/\eta\bigr) +
\frac{\tau}{\eta} \Prob(E)
\\
&&\quad= \frac1\eta\int_{\tau}^\infty\Prob
\bigl(M_J/S_J^\eta>y/\eta\bigr) \,\mathrm{d}y +
\frac{\tau}{\eta} \Prob\bigl(M_J/S_J^\eta>
\tau/\eta\bigr) + \frac{\tau}\eta \Prob(E)
\\
&&\quad\le\frac1\eta\int_{\tau}^\infty\Prob
\bigl(\mathrm{e}^{\Delta(\eta)}>y\bigr) \,\mathrm{d}y + \frac{\tau}{\eta} \Prob\bigl(\mathrm{e}^{\Delta(\eta)}>
\tau\bigr) + \frac{\tau}\eta \Prob(E)
\\
&&\quad\le \frac1\eta\int_{\tau}^\infty\exp \biggl(-
\frac{(\log(y)-\Expec
\Delta(\eta))^2}{4\eta^\alpha} \biggr)\,\mathrm{d}y + \frac{\tau}{\eta} \exp \biggl(-\frac{(\log(\tau)-\Expec\Delta(\eta))^2}{4\eta^\alpha}
\biggr) + \frac{\tau}\eta \Prob(E),
\end{eqnarray*}
where the last inequality uses Borell's inequality, for example, Adler and
Taylor \cite{adlertaylor:randomfields2007}, Theorem~2.1.1.
We can bound this further by bounding $\Expec\Delta(\eta)$.
After setting
\[
\kappa(\eta) = \sup_{t\in J} \bigl(\Var(Z_t) - \Var
\bigl(Z_t^\eta\bigr)\bigr),
\]
we obtain that $\Delta(\eta) \le\kappa(\eta) + \delta(\eta)$.
We next want to apply Theorem~1.3.3 of Adler and
Taylor~\cite{adlertaylor:randomfields2007} to bound $\Expec\delta(\eta
)$, but
the statement of this theorem contains an unspecified constant.
Our numerical experiments require
that all constants be explicit, and therefore we directly work with the
bound derived in the proof of this theorem.
Choose $r=1/(2\eta^{\alpha/2})$, and set $N_j = |J|r^{j/H}$. The proof
of this theorem shows that
\[
\Expec\delta(\eta) \le \sqrt{\frac{2\pi}{\log(2)}} \sum
_{j=2}^\infty2^{3/2} r^{-j+1} \sqrt
{\log\bigl(2^{j+1} N_j^2\bigr)}=:
\mathcal E(\eta),
\]
which is readily evaluated numerically.

As a result, whenever $\tau>\mathrm{e}^{\mathcal E(\eta)+\kappa(\eta)}$, we have
\begin{eqnarray*}
\Expec\bigl(M_J/S_J^\eta;E\bigr)&\le& \frac1
\eta\int_{\tau}^\infty\exp \biggl(-\frac{(\log(y)-\kappa(\eta
)-\mathcal E(\eta))^2}{4\eta^\alpha}
\biggr)\,\mathrm{d}y
\\
&&{}+ \frac{\tau}{\eta} \exp \biggl(-\frac{(\log(\tau)-\kappa(\eta)-\mathcal
E(\eta))^2}{4\eta^\alpha} \biggr) +
\frac{\tau}\eta\Prob(E).
\end{eqnarray*}
To apply this bound, one needs to select $\tau$ appropriately. Note
that we may let $\tau$ depend on the interval $J$.

\section{Estimation}
\label{sec:estimation}
This section studies the effect of truncation and discretization of $Z$
on $\mathcal H_\alpha$.
The bounds we develop are used in the next section, where we
perform a simulation study in order to estimate~$\mathcal H_\alpha$.

In addition to $\mathcal H_\alpha$ and $\mathcal H_\alpha^\eta$, the
following quantities
play a key role throughout the remainder of this paper:
\[
\mathcal H_\alpha(T) = \Expec \biggl[\frac{\sup_{-T\le t \le
T}\mathrm{e}^{Z_t}}{\int_{-T}^T
\mathrm{e}^{Z_t}\,\mathrm{d}t} \biggr],\qquad
\mathcal H_\alpha^\eta(T) = \Expec \biggl[\frac{\sup_{-T/\eta\le k\le T/\eta}\mathrm{e}^{Z_{k\eta}}}{\eta\sum_{-T/\eta\le k\le T/\eta} \mathrm{e}^{Z_{k\eta}}}
\biggr],
\]
where it is implicit that $t$ is a continuous-time parameter and $k$
only takes integer values.
Throughout, we assume that the truncation horizon $T>0$ and mesh size
$\eta$ are fixed.
We also assume for convenience that $T$ is an integer multiple of $\eta$.

We now introduce some convenient abbreviations.
For fixed $0<a_1< a_2<\cdots$, we write $J_0=(-a_1,a_1)$ and $J_j =
J_j^+ =
[a_j,a_{j+1})$,
$J_{-j} = J_j^- = (-a_{j+1},-a_j]$ with $j\ge1$. Throughout this
section, we use $a_1=T$.
Write $M_j = \sup_{t\in J_j} \mathrm{e}^{Z_t}$, $S_j = \int_{J_j} \mathrm{e}^{Z_t} \,\mathrm{d}t$,
$M_j^\eta= \sup_{k:k\eta\in J_j} \mathrm{e}^{Z_{k\eta}}$, and $S^\eta_j = \eta
\sum_{k:k\eta\in J_j} \mathrm{e}^{Z_{k\eta}}$,
and set $M = \sup_{j\in\Z} M_j$, $S=\sum_{j\in\Z} S_j$, and $S^\eta=
\sum_{j\in\Z} S^\eta_j$.
The length of an interval $J_j$ is denoted by $|J_j|$.

The first step in our error analysis is a detailed comparison of
$\mathcal H_\alpha=\Expec(M/S^\eta)$ and $\Expec(M_0/S_0^\eta)$, which
entails truncation
of the horizon over which the supremum and sum are taken.
As a second step, we compare $\Expec(M_0/S_0^\eta)$ to $\mathcal
H_\alpha^\eta(T)=\Expec(M_0^\eta/S_0^\eta)$,
which entails approximating the maximum on a discrete mesh.

\subsection{Truncation}
This subsection derives upper and lower bounds on $\Expec(M/S^\eta)$ in
terms of $\Expec(M_0/S_0^\eta)$.
For convenience we derive our error bounds for $a_j=T (1+\gamma)^{j-1}$
for $j\ge1$,
for some $\gamma>0$.
Presumably sharper error bounds can be given when the choice of the
$a_j$ is optimized.

\subsubsection{An upper bound}
We derive an upper bound on $\Expec(M/S^\eta)$ in terms of $\Expec
(M_0/S_0^\eta)$.
Since $S\ge S_j$ for any $j\in\Z$, we have
%
%
\begin{eqnarray}
\label{eq:summationtruncation} \Expec\bigl(M/S^\eta\bigr) &=& \Expec
\biggl[\frac{M_0}{S^\eta}; M=M_0 \biggr] + \sum
_{j\neq0} \Expec \biggl[\frac{M_j}{S^\eta}; M=M_j
\biggr]
\nonumber
\\
&\leq&\Expec \bigl(M_0/S^\eta_0\bigr) + \sum
_{j\neq0} \Expec \biggl[\frac{M_j}{S^\eta_j};
M_j > 1 \biggr]
\\
&\leq& \Expec\bigl(M_0/S^\eta_0\bigr) + 2\sum
_{j\ge1} \Expec \biggl[\frac{M_j}{S^\eta_j}; \sqrt{2}\sup
_{s\in J_j} B_s > \min_{s\in
J_j}|s|^{\alpha}
\biggr].\nonumber
\end{eqnarray}
Set $E_j=\{\sqrt{2}\max_{s\in J_j} B_s > \min_{s\in J_j}|s|^{\alpha}\}$.
To further bound (\ref{eq:summationtruncation}), we use the bounds
developed in Section~\ref{sec:auxiliary}.
Thus, the next step is to bound $\Prob(E_j)$ from above.
We write $\tau_j$ for $\tau$ used in the $j$th term.
Using the facts that $B$ has stationary increments and is self-similar,
we find
that
by Theorem~2.8 in Adler \cite{adler:gaussianprocesses1990},
%
%
\begin{eqnarray}
\label{eq:expsupbnd} \Expec \Bigl(\max_{s\in J_j} B_s
\Bigr)&=&\Expec \Bigl(\max_{0\le s\le|J_j|} B_s \Bigr)=
|J_j|^{\alpha/2} \Expec \Bigl(\max_{0\le s\le1}
B_s \Bigr)
\nonumber
\\[-8pt]
\\[-8pt]
\nonumber
&\le& 2|J_j|^{\alpha/2} \Expec \Bigl(\max_{0\le s\le1}
sN \Bigr) = |J_j|^{\alpha/2},
\end{eqnarray}
where $N$ stands for a standard normal random variable.
We derive a bound on $\Prob(E_j)$ in a slightly more general form for
later use.
It follows from Borell's inequality that, for $0<a<b$, $c\in\R$,
%
%
\begin{equation}
\label{eq:borellbound} \Prob \Bigl(\sqrt{2}\max_{s\in[a,b]}
B_s > c+ a^{\alpha} \Bigr) \le \exp \biggl(-\frac
{[c+a^\alpha-\sqrt2(b-a)^{\alpha/2}]^2}{4b^\alpha}
\biggr),
\end{equation}
provided $c+ a^\alpha > \sqrt{2} (b-a)^{\alpha/2}$.
Specialized to $\Prob(E_j)$, we obtain that for $j\ge1$, 
$a_j^{\alpha} > \sqrt2 (a_{j+1}-a_j)^{\alpha/2}$,
\[
\Prob(E_j) \leq\exp \biggl\{-\frac{ [a_j^{\alpha} - \sqrt2
(a_{j+1}-a_j)^{\alpha/2}]^2}{4a_{j+1}^{\alpha}} \biggr\} =\exp
\biggl\{-\frac{(a_j^{\alpha/2}-\gamma^{\alpha/2}\sqrt
2)^2}{4(1+\gamma)^\alpha} \biggr\},
\]
provided $T>\gamma2^{1/\alpha}$.

Thus, the error is upper bounded by $\exp(-c' T^\alpha)$ for some
constant $c'$
as $T\to\infty$. As a result, the error decreases to zero much faster
than any
polynomial, unlike the classical representation for which the error
can be expected to be polynomial as previously discussed.
This is one of the key advantages of our new representation.

\subsubsection{A lower bound}
We derive a lower bound on $\Expec(M/S^\eta)$ in terms of $\Expec
(M_0/S_0^\eta)$ as follows:
\begin{eqnarray*}
\Expec\bigl(M/S^\eta\bigr) &\geq&\Expec \biggl[\frac{M_0}{S^\eta_0}\cdot
\frac
{S^\eta_0}{S^\eta_0 +
\sum_{j\ne0} S^\eta_j}; \varepsilon S^\eta_0 \ge\sum
_{j\neq0} S^\eta _j \biggr]
\\
&\geq&\frac1{1+\varepsilon} \Expec \biggl[\frac{M_0}{S^\eta_0}; \varepsilon
S^\eta_0 \ge\sum_{j\neq0}
S^\eta_j \biggr]
\\
&=& \frac{1}{1+\varepsilon}\Expec\bigl(M_0/S^\eta_0
\bigr)- \frac{1}{1+\varepsilon}\Expec \biggl[\frac{M_0}{S^\eta_0}; \varepsilon
S^\eta_0 < \sum_{j\neq0}
S^\eta_j \biggr].
\end{eqnarray*}

Set $E =  \{\varepsilon S^\eta_0 < \sum_{j\neq0} S^\eta_j \}$. To
apply the technique from Section~\ref{sec:auxiliary}, we
seek an upper bound on $\Prob(E)$. Let $0<\delta<T$, to be determined later.
Since $S^\eta_0 \geq\eta$, we obtain
\[
\Prob(E) \leq\Prob \biggl(\sum_{j\ne0}
S^\eta_j>\varepsilon\eta \biggr)
\nonumber
\leq 2\sum
_{j\ge1} \Prob \bigl(S^\eta_j>
\varepsilon\eta q_j \bigr)
\]
for any probability distribution $\{q_j\dvt j\neq0\}$.
We find it convenient to take $q_j = \psi(1+\psi)^{-|j|}/2$ for some
$\psi>0$ and $j\neq0$.
An upper bound on $S^\eta_j$ for $j\ge1$ is
\[
S^\eta_j \leq(a_{j+1} - a_j)\mathrm{e}^{-a_j^{\alpha}}\mathrm{e}^{\sqrt{2} \max_{s \in J_j}
B_s}=
\gamma a_j \mathrm{e}^{-a_j^{\alpha}}\mathrm{e}^{\sqrt{2} \max_{s \in J_j}
B_s}.
\]
For $j\ge1$, we therefore have
\begin{eqnarray*}
\Prob \bigl(S^\eta_j>\varepsilon\eta q_j
\bigr) &\le& \Prob \bigl(\mathrm{e}^{\sqrt{2} \max_{s \in J_j}B_s}>\varepsilon\eta \mathrm{e}^{a_j^{\alpha}}
q_j/(\gamma a_j) \bigr)
\\[-2pt]
&=& \Prob \Bigl({\sqrt{2} \max_{s \in J_j}B_s}>a_j^{\alpha}
+\log \bigl[\varepsilon\eta q_j/(\gamma a_j) \bigr]
\Bigr)
\\[-2pt]
&\le& \exp \biggl(-\frac{ (\log [\varepsilon\eta
q_j/(\gamma a_j) ] + a^\alpha_j
-\sqrt2 \gamma^{\alpha/2} a_j^{\alpha/2} )^2}{4(1+\gamma)^\alpha
a_{j}^\alpha} \biggr),
\end{eqnarray*}
provided $T$ is large enough so that the expression inside the square
is nonnegative.
The last inequality follows from (\ref{eq:borellbound}).

\subsection{Approximating the supremum on a mesh}
We now find upper and lower bounds on $\Expec(M_0/S_0^\eta)$ in terms
of $\Expec(M_0^\eta/S_0^\eta)$.


For the upper bound, we note that
\[
\Expec\bigl(M_0/S^\eta_0\bigr)\le
\mathrm{e}^{\varepsilon}\Expec\bigl(M^\eta_0/ S^\eta_0
\bigr) +\Expec\bigl(M_0/S^\eta_0;
\Delta_0(\eta) > \varepsilon\bigr).
\]
We use the technique from Section~\ref{sec:auxiliary} to bound $\Expec
(M^\eta_0/ S^\eta_0; \Delta_0(\eta)>\varepsilon)$, which requires a bound on
$\Prob(\Delta_0(\eta)>\varepsilon)$.
Writing $\kappa_0(\eta) = \max(\eta^\alpha,T^\alpha-(T-\eta)^\alpha)$,
we use the self-similarity in conjunction with Borell's inequality and
(\ref{eq:expsupbnd}) to deduce that
\begin{eqnarray*}
\Prob \bigl(\Delta_0(\eta)>\varepsilon \bigr) &\le& \Prob \Bigl(
\sup_{t\in(-T,T)} \sqrt2 \bigl(B_t- B^\eta_t
\bigr)> \varepsilon-\kappa_0(\eta) \Bigr)
\\[-2pt]
&\le& \frac{2T}{\eta} \Prob \Bigl(\sqrt2 \sup_{t\in(0,1)}
\eta^{\alpha
/2} B_t > \varepsilon-\kappa_0(\eta)
\Bigr)
\\[-2pt]
&\le& \frac{2T}{\eta} \Prob \biggl(\sup_{t\in[0,1]}
B_t > \frac{\varepsilon-\kappa_0(\eta)}{\sqrt2 \eta^{\alpha/2}} \biggr)
\\[-2pt]
&\le& \frac{2T}{\eta} \exp \biggl(-\frac12 \biggl[\frac{\varepsilon-\kappa
_0(\eta)}{\sqrt2\eta^{\alpha/2}}-1
\biggr]^2 \biggr),
\end{eqnarray*}
provided $\varepsilon>\kappa_0(\eta)$.\vadjust{\goodbreak}

A lower bound on $\Expec(M_0/ S^\eta_0)$ in terms of $\Expec(M^\eta
_0/S^\eta_0)$ follows trivially:
\[
\Expec\bigl(M_0/ S^\eta_0\bigr) \ge\Expec
\bigl(M^\eta_0/S^\eta_0\bigr).\vspace*{-2pt}
\]

\subsection{Conclusions}
We summarize the bounds we have obtained.
For any $\varepsilon>0$, we have derived the following upper bound:
%
%
\begin{equation}
\label{eq:UBerroranalysis} \mathcal H_\alpha\le \mathrm{e}^{\varepsilon}\Expec
\bigl(M^\eta_0/ S^\eta_0\bigr) +
\Expec \biggl[\frac{M_0}{S^\eta_0};\Delta_0(\eta) > \varepsilon \biggr]
+ 2\sum_{j\ge1} \Expec \biggl[\frac{M_j}{S^\eta_j};
\sqrt{2}\sup_{s\in J_j} B_s > \min_{s\in J_j}|s|^{\alpha}
\biggr],
\end{equation}
where the second and third terms are bounded further using Section~\ref{sec:auxiliary}.
Note that this requires selecting a $\tau$ for each of the terms; we
will come back
to this in the next section.

For any $\varepsilon>0$, we have derived the following lower bound:
%
%
\begin{equation}
\label{eq:LBerroranalysis} \mathcal H_\alpha\ge \frac{1}{1+\varepsilon}\Expec
\bigl(M^\eta_0/ S^\eta_0\bigr) -
\frac{1}{1+\varepsilon}\Expec \biggl[\frac{M_0}{S^\eta_0}; \varepsilon
S^\eta_0 < \sum_{j\neq0}
S^\eta_j \biggr],
\end{equation}
and we again use Section~\ref{sec:auxiliary}.
We note that we may choose a different $\varepsilon$ for the upper bound
and the lower bound,
which we find useful in the next section.\vspace*{-2pt}

\section{Numerical experiments}\vspace*{-2pt}
\label{sec:numexp}
This section consists of two parts.
The first part studies $\mathcal H_\alpha^\eta(T)$
for suitable choices of the simulation horizon $T$ and the
discretization mesh $\eta$,
and uses the previous section to estimate bounds on $\mathcal H_\alpha$.
In the second part of this section, we present a heuristic method for
obtaining sharper estimates
for $\mathcal H_\alpha$.

Simulation of fractional Brownian motion is highly nontrivial,
but there exists a vast body of literature on the topic.
The fastest available algorithms simulate the process on an equispaced grid,
by simulation of the
(stationary) increment process, which often called fractional Gaussian noise.
We use the method of Davies and
Harte \cite{daviesharte:hursteffect1987}
for simulating $n$ points of a fractional Gaussian noise.
This method requires that $n$ be a power of two.
In this approach, the covariance matrix is embedded in a so-called
circulant matrix, for which the eigenvalues can easily be computed.
The algorithm relies on the Fast Fourier Transform (FFT) for maximum efficiency;
the computational effort is of order $n\log n$ for a sample size of
length $n$.
For more details on simulation of fractional Brownian motion, we refer to
Dieker \cite{dieker:mastersthesis2002}.\vspace*{-2pt}

\subsection{Confidence intervals}
Our next aim is to give a point estimate for $\mathcal H_\alpha^\eta(T)$
and use the upper and lower bounds from the previous section to
obtain an interval estimate for Pickands' constant $\mathcal H_\alpha$.

The truncation and discretization errors both critically depend on
$\alpha$, but we choose $T$ and $\eta$ to be fixed throughout our
experiments\vadjust{\goodbreak}
in order to use a simulation technique known as \textit{common random numbers}.
This means that the same stream of (pseudo)random numbers is used for
all values of $\alpha$. By choosing $T$ and $\eta$ independent of
$\alpha$,
the realizations of fractional Brownian motion in the $n$th simulation
replication
are perfectly dependent for different values of $\alpha$.
As a result, our estimate of $\mathcal H_\alpha$ as a function of
$\alpha$ is smoothened
without any statistical sacrifice.

Since $T$ and $\eta$ are fixed, our estimates for $\mathcal H_\alpha$ are
likely to be far off from $\mathcal H_\alpha^\eta(T)$ for small~$\alpha$.
In that regime our algorithm becomes unreliable, since the truncation
horizon would have to grow
so large that it requires ever more computing power to produce an estimate.
Any method that relies on truncating the simulation horizon suffers
from this problem, and it seems
unlikely that truncation can be avoided.
There is some understanding of the asymptotic behavior of $\mathcal
H_\alpha$ as $\alpha\downarrow0$
(Shao \cite{shao:bounds1996},
Harper \cite{harper2010}) so this regime is arguably less
interesting from a simulation point of view.
Since we cannot trust the simulation output for small $\alpha$, we
focus our experiments on $\alpha\ge7/10$.

Somewhat arbitrarily, we chose to calibrate errors using $\alpha=1$,
so that our estimates of $\mathcal H_\alpha(T)$ are close to $\mathcal
H_\alpha$ for $\alpha\ge1$.
The closer one sets the calibration point to $0$,
the higher one has to choose $T$ (and thus more computing power).
We estimate $\mathcal H_\alpha^\eta(T)$ using $1500$ simulation replications,
which takes about three days on a modern computer for each value of
$\alpha$.
We carry out the simulation for $\alpha=14/20, 15/20, \ldots,40/20$,
and interpolate
linearly between the simulated points.
A high-performance computing environment is used to run the experiments
in parallel.

We choose the parameters so that the simulated error bounds
from the previous section yield an error of approximately $3\%$ for
$\alpha=1$.
The most crucial parameter in the error analysis is $\varepsilon$.
We note that a different $\varepsilon$ can be used for the lower and upper
bound, and
that $\varepsilon$ may depend on $\alpha$, so we take advantage of this
extra flexibility to
carefully select $\varepsilon$.
For the upper bound in (\ref{eq:UBerroranalysis}) we use $\varepsilon=
0.005+ 0.025 \cdot(2-\alpha)$, and
for the lower bound in (\ref{eq:LBerroranalysis}) we use $\varepsilon
=(0.005+ 0.025\cdot(2-\alpha)) /3$.
We use $T=128$ and $\eta=1/2^{18}$.

We next discuss how we have chosen the other parameters in the error
analysis from Section~\ref{sec:estimation}.
These have been somewhat optimized.
Equations~(\ref{eq:UBerroranalysis}) and (\ref{eq:LBerroranalysis})
produce bounds on $\mathcal H_\alpha$ in terms of $\mathcal H^\eta
_\alpha(T)$
in view of Section~\ref{sec:auxiliary}, but this requires selecting
some $\tau$ for each term for which Section~\ref{sec:auxiliary}
is applied. We use $\tau_j = 1.3\cdot(1.005)^{j-1}$ for the $j$-term
in the infinite sum, and $\tau=1.4$ for any of the other terms.
We set $\gamma=0.025$ for the growth rate of $a_j$, and we use $\psi=
0.3$ for the decay rate of $q_j$.
For these parameter values, all event-independent terms in Section~\ref{sec:auxiliary} are negligible.
Finally, we replace $\mathcal H^\eta_\alpha(T)$ in the resulting bounds
with its estimate.

In Figure~\ref{fig1}, we plot our estimates of $\mathcal H_\alpha^\eta(T)$
as a function of $\alpha$ (blue, solid), along with their 95\%
confidence interval (green, dotted)
and our bounds for $\mathcal H_\alpha$ (red, dash-dotted). The
numerical values are given in Appendix~\ref{sec:simulatedvalues}.
Note that the errors we find for $\alpha<1$
are so large that our error bounds are essentially useless. We do
believe that the simulated
values are reliable approximations to $\mathcal H_\alpha$, but the
bounds from our error analysis
are too loose.
%

\begin{figure}

\includegraphics{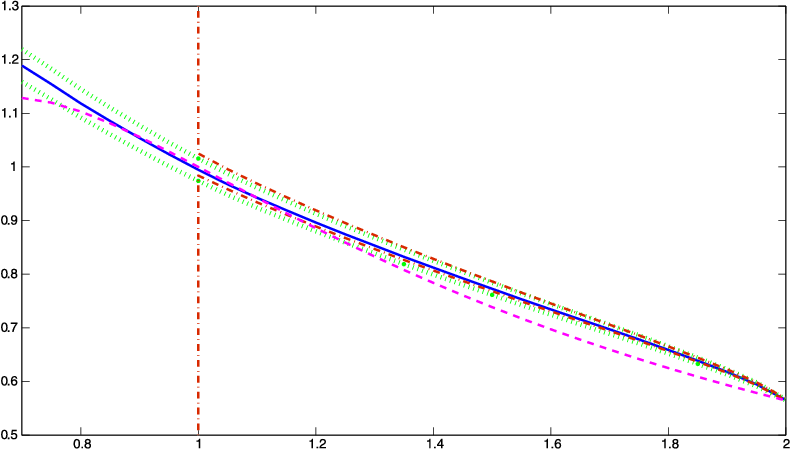}

\caption{Point estimates (blue, solid) and interval estimates (green,
dotted) for $\mathcal H_\alpha^\eta(T)$ as a function of~$\alpha$.
Our error analysis shows that $|\mathcal H_\alpha^\eta(T)-\mathcal
H_\alpha|$ is
at most $0.03$ for $\alpha\ge1$ (red, dash-dotted). We also plot
$1/\Gamma(1/\alpha)$ (magenta, dashed).}\vspace*{-3pt}
\label{fig1}
\end{figure}

A well-known conjecture states that $\mathcal H_\alpha=1/\Gamma(1/\alpha
)$ (D{\c{e}}bicki and Mandjes \cite{MR2834197}),
but (to our knowledge) it lacks any foundation other than that
$\lim_{\alpha\downarrow0} \mathcal H_\alpha= \lim_{\alpha\downarrow0}
1/\Gamma(1/\alpha)=0$,
$\mathcal H_1 = 1/\Gamma(1)$, and $\mathcal H_2=1/\Gamma(1/2)$.
A referee communicated to us that this conjecture is due to K.~Breitung.
Our simulation gives strong evidence that this conjecture is not
correct: the function $1/\Gamma(1/\alpha)$ is
the magenta, dashed curve in Figure~\ref{fig1}, and we see that
the confidence interval and error bounds are well above the curve for
$\alpha$ in the range 1.6--1.8.
Note that we cannot \textit{exclude} that this conjecture holds, since our
error bounds are based on Monte
Carlo experiments.
However, this formula arguably serves as a reasonable approximation for
$\alpha\ge1$.\vspace*{-2pt}

\subsection{A regression-based approach}\vspace*{-2pt}
In the previous subsection, we approximated $\mathcal H_\alpha$ by
$\mathcal H_\alpha^\eta(T)$.
The main contribution to the error is the discretization step,
so we now focus on a refined approximation based on the behavior of
$\mathcal H_\alpha^\eta(T)$
as $\eta\downarrow0$.

This approach relies on the rate at which $\mathcal H_\alpha^\eta(T)$
converges to $\mathcal H_\alpha(T)$.
We state this as a conjecture, it is
outside the scope of the current paper to (attempt to) prove it.

\begin{conjecture}
\label{conj}
For fixed $T>0$, we have
$\lim_{\eta\downarrow0} \eta^{-\alpha/2}[\mathcal{H}_\alpha(T) -
\mathcal{H}^\eta_\alpha(T)] \in(0,\infty)$.
We also have $\lim_{\eta\downarrow0} \eta^{-\alpha/2} [\mathcal
{H}_\alpha- \mathcal{H}^\eta_\alpha] \in(0,\infty)$.
\end{conjecture}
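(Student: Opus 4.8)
The plan is first to show that the error is governed entirely by the discretization of the supremum (not of the integral), and then to identify the resulting constant through the local behaviour of $Z$ near its maximum. Write
\[
\mathcal H_\alpha(T)-\mathcal H_\alpha^\eta(T)=\Expec\!\left[\frac{M_0-M_0^\eta}{S_0^\eta}\right]+\Expec\!\left[M_0\!\left(\frac1{S_0}-\frac1{S_0^\eta}\right)\right].
\]
For the second (quadrature) term use $1/S_0-1/S_0^\eta=(S_0^\eta-S_0)/(S_0S_0^\eta)$ and note that $S_0-S_0^\eta=\sum_k\bigl[\int_{k\eta}^{(k+1)\eta}e^{Z_t}\,dt-\eta e^{Z_{k\eta}}\bigr]$ has zero mean up to an $O(\eta)$ boundary correction (since $\Expec e^{Z_t}=1$) and, because the increments of $B$ over a length-$\eta$ interval are of order $\eta^{\alpha/2}$ while the drift contributes only an $O(\eta)$ correction, has $L^2$-norm $o(\eta^{\alpha/2})$; combined with the tail bounds of Section~\ref{sec:auxiliary}, which give $L^2$-control of $M_0/S_0^\eta$, this shows the quadrature term is $o(\eta^{\alpha/2})$, and the same bounds let one replace $S_0^\eta$ by $S_0$ in the first term at cost $o(\eta^{\alpha/2})$. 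Next let $t^*$ be the a.s.\ unique location of the maximum of $Z$ on $[-T,T]$ and, on the event that $t^*$ is interior (the boundary event is handled by a separate and easier estimate), let $k_-\eta<t^*\le k_+\eta$ be its two neighbouring grid points; for small $\eta$ the grid maximum is attained at $k_-\eta$ or $k_+\eta$, so that $M_0-M_0^\eta=M_0(1-e^{-D_\eta})$ with
\[
D_\eta:=Z_{t^*}-\max\bigl(Z_{k_-\eta},Z_{k_+\eta}\bigr)=\min\bigl(Z_{t^*}-Z_{k_-\eta},\,Z_{t^*}-Z_{k_+\eta}\bigr)\ge0,
\]
and since $D_\eta\downarrow0$ we have $1-e^{-D_\eta}=D_\eta(1+o(1))$. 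Thus it remains to identify $\lim_{\eta\downarrow0}\eta^{-\alpha/2}\Expec[(M_0/S_0)\,D_\eta]$.

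The crux is the local profile of $Z$ at its maximum. One expects that, conditionally on the macroscopic data (the value $Z_{t^*}$, the location $t^*$, and the restriction of $Z$ to the complement of a shrinking neighbourhood of $t^*$), the rescaled profile $v\mapsto\eta^{-\alpha/2}\bigl(Z_{t^*}-Z_{t^*+\eta v}\bigr)$ converges as $\eta\downarrow0$ to a nondegenerate limiting process $\xi=\{\xi_v:v\in\R\}$ with $\xi_0=0$, $\xi_v>0$ for $v\ne0$, self-similar of index $\alpha/2$; this is the fractional analogue of Williams' representation of Brownian motion near its maximum by a pair of independent $\mathrm{BES}(3)$ processes. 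Since the law of $t^*$ restricted to the open interval is absolutely continuous, the normalised offset (the fractional part of $t^*/\eta$) becomes asymptotically uniform on $[0,1)$ and asymptotically independent of the profile, so that $\eta^{-\alpha/2}D_\eta$ converges in distribution to $\min(\xi_{-U},\xi_{1-U})$ with $U$ uniform on $[0,1)$ independent of $\xi$. Using the Borell-type bounds of Section~\ref{sec:auxiliary} for uniform integrability, one then obtains
\[
\lim_{\eta\downarrow0}\eta^{-\alpha/2}\bigl[\mathcal H_\alpha(T)-\mathcal H_\alpha^\eta(T)\bigr]=\Expec\!\left[\frac{M_0}{S_0}\,\min(\xi_{-U},\xi_{1-U})\right]\in(0,\infty),
\]
where strict positivity uses $\min(\xi_{-U},\xi_{1-U})>0$ a.s.

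For the untruncated statement one lets $T\to\infty$. The profile analysis is purely local, so the constant above converges to $\Expec[(M/S)\min(\xi_{-U},\xi_{1-U})]$ with $\xi$ now the local profile of $Z$ at its global maximum on $\R$, while the truncation errors $\mathcal H_\alpha-\mathcal H_\alpha(T)$ and $\mathcal H_\alpha^\eta-\mathcal H_\alpha^\eta(T)$ vanish as $T\to\infty$ uniformly in small $\eta$ by arguments parallel to those of Section~\ref{sec:estimation}; a diagonal argument with $T=T(\eta)\to\infty$ sufficiently slowly then yields $\lim_{\eta\downarrow0}\eta^{-\alpha/2}[\mathcal H_\alpha-\mathcal H_\alpha^\eta]=\Expec[(M/S)\min(\xi_{-U},\xi_{1-U})]\in(0,\infty)$.

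The hard part will be the local-profile step. For $\alpha=1$ it is classical (Williams' decomposition); at $\alpha=2$ the relevant profile is quadratic rather than $\alpha/2$-self-similar, the difference is then $o(\eta^{\alpha/2})$, and the statement should be understood for $\alpha\in(0,2)$; but for intermediate $\alpha$ fractional Brownian motion has neither the Markov property nor a path decomposition, so one would have to construct the conditioned local profile $\xi$ directly --- for instance via Slepian-type models, the Palm calculus of the random argmax, or the fine theory of level sets and maxima of Gaussian processes --- and, equally crucially, establish its nondegeneracy ($\xi_v>0$ for $v\ne0$), which is precisely what prevents the limiting constant from collapsing to $0$. Making the truncation step uniform in $\eta$ (the estimates of Section~\ref{sec:estimation} carry spurious $1/\eta$ factors) is a further, more routine ingredient.
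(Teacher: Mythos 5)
You should first be clear about what the paper does here: this statement is presented as Conjecture~\ref{conj}, and the authors explicitly say that proving it is outside the scope of the paper. They offer only a motivating heuristic — via Proposition~\ref{prop:discreteanalog} write $\eta^{-\alpha/2}[\mathcal H_\alpha-\mathcal H_\alpha^\eta]=\Expec\bigl[\eta^{-\alpha/2}(e^{m-m^\eta}-1)\,M^\eta/S^\eta\bigr]$, linearize $e^{m-m^\eta}-1\approx m-m^\eta$, and appeal to related literature suggesting that $\eta^{-\alpha/2}(m-m^\eta)$ converges in distribution. Your proposal is a more elaborate version of the same heuristic, and you correctly flag that it is not a proof. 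The decisive missing ingredient is exactly your ``local-profile step'': the existence, nondegeneracy, self-similarity and conditional-independence structure of a rescaled limit $\xi$ of $Z$ near its argmax is, for fractional Brownian motion with $\alpha\notin\{1,2\}$, precisely the open content of the conjecture; neither the paper nor your argument supplies it, and without it nothing forces the limit to be finite and strictly positive. Equally, passing from the (conjectured) distributional convergence of $\eta^{-\alpha/2}D_\eta$ to convergence of the expectation $\eta^{-\alpha/2}\Expec[(M_0/S_0)(M_0-M_0^\eta)/M_0]$ requires uniform integrability at the scale $\eta^{-\alpha/2}$, which is strictly more than the bounds of Section~\ref{sec:auxiliary} provide (those give $M_J/S_J^\eta\le\eta^{-1}e^{\Delta(\eta)}$ and restricted-expectation tail bounds, not uniform-in-$\eta$ moment control of the rescaled error).

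Within the heuristic itself there are two concrete soft spots worth fixing if you pursue this. First, the claim that for small $\eta$ the grid maximum is attained at one of the two grid points adjacent to $t^*$ is not justified and is false in general: under the rescaling, all grid offsets compete, so the limiting constant should read $\Expec\bigl[(M_0/S_0)\,\min_{k\in\Z}\xi_{k-U}\bigr]$ rather than $\Expec\bigl[(M_0/S_0)\,\min(\xi_{-U},\xi_{1-U})\bigr]$. Second, the assertions that the quadrature term is $o(\eta^{\alpha/2})$, that $S_0^\eta$ may be replaced by $S_0$ at cost $o(\eta^{\alpha/2})$, and that the truncation errors are uniform in $\eta$ (needed for the second half of the conjecture, and you note yourself that the Section~\ref{sec:estimation} bounds carry $1/\eta$ factors) are all stated without proof. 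Your remark that at $\alpha=2$ the rescaled difference tends to $0$, so the statement should be read for $\alpha\in(0,2)$, is a sensible caveat that the paper does not make. In short: as a research program your outline is reasonable and sharpens the paper's own motivation, but there is no proof here to validate, and the key lemma it would rest on remains unproven.
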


We motivate this conjecture as follows. We focus on the last part of
the conjecture for brevity.
Since $\mathcal{H}_\alpha= \Expec[M/S^\eta]$ by Proposition~\ref
{prop:discreteanalog}, we obtain that
\[
\eta^{-\alpha/2}\bigl[\mathcal{H}_\alpha- \mathcal{H}^\eta_\alpha
\bigr] = \Expec \biggl[\eta^{-\alpha/2}\bigl(\mathrm{e}^{m-m^\eta} -1\bigr) \times
\frac{M^\eta}{S^\eta} \biggr],\vadjust{\goodbreak}
\]
where $m^\eta=\log M^\eta$ and $m=\log M$.
The right-hand side equals approximately
\[
\Expec \biggl[\eta^{-\alpha/2}\bigl(m-m^\eta\bigr)\times
\frac{M^\eta}{S^\eta} \biggr].
\]
This expectation involves a product of two random variables. The random
variable $M^\eta/S^\eta$ converges
almost surely to the finite random variable $M/S$ as $\eta\downarrow0$.
Although we are not aware of any existing results on the behavior of
$\eta^{-\alpha/2} (m-m^\eta)$ or its expectation,
we expect that the random variable $\eta^{-\alpha/2} (m-m^\eta)$
converges in distribution.
Indeed, this is suggested by prior work on related problems,
see Asmussen \textit{et~al.} \cite{MR1384357} for the case $\alpha=1$
and H{\"u}sler \textit{et~al.} \cite{husler:uniformnorms2003},
Seleznjev~\cite{MR1387887} for general results on interpolation approximations
for Gaussian processes
(which is different but related).
The rate of convergence of $m^\eta$ to $m$ (or for finite-horizon
analogs) seems to be of
general fundamental interest, but falls outside the scope of this paper.

Conjecture~\ref{conj} implies that for some $c=c(T)>0$, for small $\eta
$, we have approximately
\[
\mathcal H_\alpha^\eta(T) = \mathcal H_\alpha(T) -c
\eta^{\alpha/2}.
\]
This allows us to perform an ordinary linear regression to
simultaneously estimate $c$ and $\mathcal H_\alpha(T)$
from (noisy) estimates of $\mathcal H_\alpha^\eta(T)$ for different
(small) values of $\eta$ and fixed $\alpha$.
One could use the same simulated fractional Brownian motion trace for
different values of $\eta$,
but it is also possible to use independent simulation experiments for
different values of $\eta$.
The latter approach is computationally less efficient, but it has the
advantage that
classical regression theory becomes available for constructing
confidence intervals of~$\mathcal H_\alpha(T)$.\looseness=-1

Even though we do not have a formal justification for this approach,
we have carried out regressions with the same simulated trace for
different values of $\eta$.
The results are reported in Figure~\ref{fig2}.
The simulation experiments are exactly the same as those underlying
Figure~\ref{fig1},
and in particular we have used the same parameter values.
The red, dashed curves are estimates for $\mathcal H_\alpha^\eta(T)$
for $\eta=2^{-14},2^{-13},2^{-12},2^{-11}$.
Using the regression approach, we estimate $\mathcal H_\alpha^\eta(T)
$ for $\eta=2^{-18}$ and
compare it with our simulation estimates for the same value of $\eta$
(blue, solid). The two resulting curves are indistinguishable
in Figure~\ref{fig2}, and the difference is of order $10^{-3}$.
We have also plotted our regression-based estimate of $\mathcal H_\alpha
(T)$ (green, dash-dotted).

\begin{figure}

\includegraphics{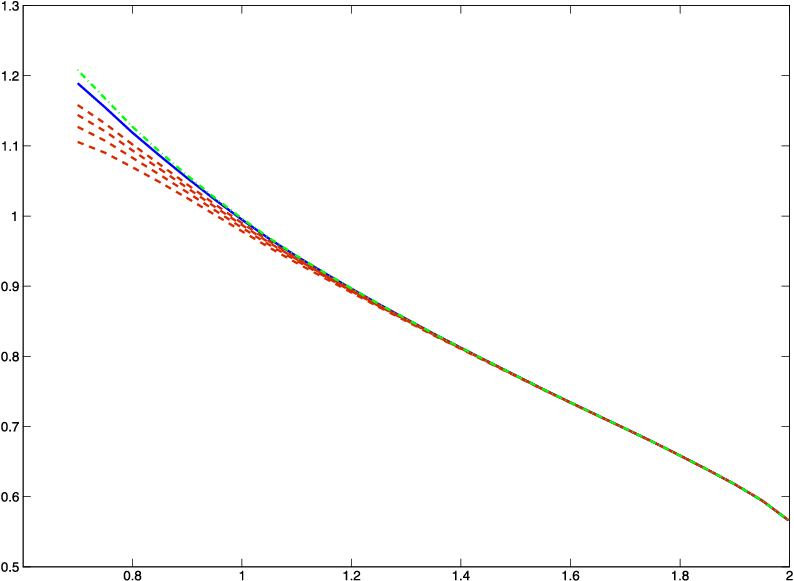}

\caption{Estimation of $\mathcal H_\alpha^\eta(T)$ for different values
of $\eta$.}
\label{fig2}
\end{figure}

It is instructive to look at the resulting estimate for $\mathcal
H_1(T)$, since we know that $\mathcal H_1=1$.
Our estimate for $\mathcal H_1(T)$ is $0.9962650$, which is indeed
closer to its true value.
As the number of simulation replications increases, we expect much more
improvement.\vspace*{-3pt}

\begin{appendix}
\section{Proofs}\vspace*{-3pt}
\label{appendix}

\begin{pf*}{Proof of Proposition~\ref{prop:UIMT}}
Our proof of (\ref{eq:unifintMT}) uses several ideas that are similar
to those in Sections~\ref{sec:auxiliary} and \ref{sec:estimation},
so our exposition here is concise.
We fix some $\eta$ for which $1/\eta$ is a (large) integer; its exact
value is irrelevant.
Recall that the quantities $J_j, M_j, S_j, M_j^\eta, S_j^\eta$ from
Section~\ref{sec:estimation}
have been introduced with respect to parameters $0<a_1<a_2<\cdots$.
Here we
use different choices: $a_1 = \lceil2^{1/(2\alpha)}\rceil$, $a_j =
a_{j-1} + 1$ for $j\ge2$.\vadjust{\goodbreak}

Abusing notation slightly, we write $M_{[-uT,(1-u)T]} = \sup_{s\in
[-uT,(1-u)T]} \mathrm{e}^{Z_{s}}$ and\break  $S_{[-uT,(1-u)T]} = \int_{-uT}^{(1-u)T}
\mathrm{e}^{Z_{s}} \,\mathrm{d}s$.
Since $M_{[-uT,(1-u)T]}\to M$ and $S_{[-uT,(1-u)T]}\to S$ almost surely
as $T\to\infty$ for $u\in(0,1)$,
both claims follow after showing that
%
%
\renewcommand{\theequation}{\arabic{equation}}
\setcounter{equation}{8}
\begin{equation}
\label{eq:unifintMT} \lim_{A\to\infty} \sup_{T>0}
\sup_{u\in(0,1)} \Expec \biggl[\frac
{M_{[-uT,(1-u)T]}}{S_{[-uT,(1-u)T]}};
\frac{M_{[-uT,(1-u)T]}}{S_{[-uT,(1-u)T]}}>A \biggr]=0.
\end{equation}

Write $\kappa_j =\kappa_j(\eta)= \sup_{t\in J_j} [\Var(Z_t) - \Var
(Z_t^\eta)]$.
First, suppose that $-uT$ and $(1-u)T$ lie in $\{\ldots
,-a_2,-a_1,a_1,a_2,\ldots\}$.
On the event $\{M_{[-uT,(1-u)T]} = M_j\}$ for some $j\in\Z$, we have
%
%
\begin{eqnarray}
\label{eq:boundMSprop1} \frac{M_{[-uT,(1-u)T]}}{S_{[-uT,(1-u)T]}} &\le& \frac{M_j}{S_j} \le
\mathrm{e}^{2\sqrt2 \sup_{s\in J_j} |B_s-B_s^\eta| +\kappa_j} \frac{M^\eta
_j}{S^\eta_j}
\nonumber
\\[-8pt]
\\[-8pt]
\nonumber
&\le& \frac1\eta \mathrm{e}^{2\sqrt2 \sup_{s\in J_j} |B_s-B_s^\eta| +\kappa
_j}.
\end{eqnarray}
Note that this bound remains valid if $-uT$ and $(1-u)T$ fail to lie in
$\{\ldots,-a_2,-a_1,\break a_1,a_2,\ldots\}$.

Since $\Expec [\mathrm{e}^{2 \sqrt2\sup_{s\in J_0} |B_s-B^\eta_s|} ]
<\infty$ by Borell's inequality,
(\ref{eq:unifintMT}) follows after we establish that
\[
\lim_{A\to\infty} \sum_{j=1}^\infty
\Expec \bigl[\mathrm{e}^{2\sqrt2 \sup_{s\in
J_j} |B_s-B_s^\eta| +\kappa_j}; \mathrm{e}^{2\sqrt2 \sup_{s\in J_j}
|B_s-B_s^\eta| +\kappa_j}>A, M_j>1 \bigr]=0.
\]
To this end, we observe that for $j\ge1$
\begin{eqnarray*}
&& \Expec \bigl[\mathrm{e}^{2\sqrt2 \sup_{s\in J_j} |B_s-B_s^\eta| +\kappa_j}; M_j>1 \bigr]
\\
&&\quad\le \sqrt{\Expec \bigl[\mathrm{e}^{4\sqrt2 \sup_{s\in J_j} |B_s-B_s^\eta| +2\kappa
_j} \bigr] \Prob(M_j>1)}
\\
&&\quad\le\sqrt{ \mathrm{e}^{2\kappa_j }\Expec \bigl[\mathrm{e}^{4\sqrt2 \sup_{s\in[0,1]}
|B_s-B_s^\eta|} \bigr] \Prob
\Bigl(\sup_{t\in J_j} \sqrt2B_t >a_j^\alpha
\Bigr)}
\\
&&\quad\le C\mathrm{e}^{\kappa_j} \exp \biggl(-\frac{(a_j^\alpha-\sqrt2\Expec[\sup_{t\in J_j} B_t])^2}{8(a_j+1)^\alpha} \biggr)
\\
&&\quad\le C\mathrm{e}^{\kappa_j} \exp \biggl(-\frac{(a_j^\alpha-\sqrt
2])^2}{8(a_j+1)^\alpha} \biggr),
\end{eqnarray*}
where $C$ denotes some constant and we have used (\ref{eq:expsupbnd})
to obtain the last inequality. Note that $a_j^\alpha> \sqrt2$
for our choice of $a_j$.
The resulting expression is summable, which establishes the required
inequality by the monotone convergence theorem.
\end{pf*}

\begin{pf*}{Proof of Proposition~\ref{prop:discreteanalog}}
Our starting point is (\ref{eq:reprcountingmeasure}) and the
accompanying remarks.
By Theorem~1.5.5 in Billingsley \cite{billingsley:convprob1968}, it suffices to
show that $\textrm{Leb}(E) = 0$, where
$E$ consists of all $u\in[0,1]$ for which
\[
\lim_{T\to\infty} \Expec \biggl[\frac{M_{[-u_T T,(1-u_T) T]}} {S^\eta
_{[-u_T T,(1-u_T) T]}} \biggr] = \Expec
\biggl[\frac M{S^\eta} \biggr]
\]
fails to hold for some $\{u_T\}$ with $u_T\to u$.
With minor modifications to the bound (\ref{eq:boundMSprop1}) since, we
work with $S^\eta$ instead of $S$,
the proof of Proposition~\ref{prop:UIMT} shows that
\[
\lim_{A\to\infty} \sup_{T>0} \sup
_{u\in(0,1)} \Expec \biggl[\frac
{M_{[-uT,(1-u)T]}}{S^\eta_{[-uT,(1-u)T]}}; \frac{M_{[-uT,(1-u)T]}}{S^\eta_{[-uT,(1-u)T]}}>A
\biggr]=0.
\]
This implies that $E\subseteq\{0,1\}$, so its Lebesgue measure is zero.
\end{pf*}

\begin{table}[b]
\def\arraystretch{0.9}
\caption{Our numerical results}\label{tabB.1}
\begin{tabular*}{\textwidth}{@{\extracolsep{4in minus 4in}}lllll@{}}
\hline
\multicolumn{1}{@{}l}{$\alpha$} & \multicolumn{1}{l}{Estimate $\mathcal H_\alpha^\eta(T)$} & \multicolumn{1}{l}{Sample stddev
$M_0/S_0^\eta$} & \multicolumn{1}{l}{Lower bound $\mathcal H_\alpha$} & \multicolumn{1}{l@{}}{Upper bound
$\mathcal H_\alpha$}\\
\hline
$0.700$ & $1.1888337$ & $0.5998979$ & -- & -- \\
$0.750$ & $1.1543904$ & $0.5614484$ & -- & -- \\
$0.800$ & $1.1184290$ & $0.5257466$ & -- & -- \\
$0.850$ & $1.0855732$ & $0.4919238$ & -- & -- \\
$0.900$ & $1.0539127$ & $0.4625016$ & -- & -- \\
$0.950$ & $1.0235620$ & $0.4360272$ & -- & -- \\
$1.000$ & $0.9946978$ & $0.4116689$ & $0.9837218$ & $1.0250320$ \\
$1.050$ & $0.9674279$ & $0.3892142$ & $0.9582444$ & $0.9956451$ \\
$1.100$ & $0.9424383$ & $0.3665194$ & $0.9338777$ & $0.9687150$ \\
$1.150$ & $0.9191131$ & $0.3442997$ & $0.9111406$ & $0.9435593$ \\
$1.200$ & $0.8963231$ & $0.3239746$ & $0.8889154$ & $0.9190136$ \\
$1.250$ & $0.8743162$ & $0.3048379$ & $0.8674489$ & $0.8953298$ \\
$1.300$ & $0.8532731$ & $0.2864521$ & $0.8469212$ & $0.8726894$ \\
$1.350$ & $0.8322652$ & $0.2698805$ & $0.8264114$ & $0.8501401$ \\
$1.400$ & $0.8121016$ & $0.2540026$ & $0.8067235$ & $0.8285072$ \\
$1.450$ & $0.7922732$ & $0.2390896$ & $0.7873523$ & $0.8072685$ \\
$1.500$ & $0.7727308$ & $0.2248372$ & $0.7682494$ & $0.7863726$ \\
$1.550$ & $0.7531251$ & $0.2112524$ & $0.7490677$ & $0.7654634$ \\
$1.600$ & $0.7342039$ & $0.1970492$ & $0.7305511$ & $0.7453000$ \\
$1.650$ & $0.7155531$ & $0.1821118$ & $0.7122884$ & $0.7254599$ \\
$1.700$ & $0.6970209$ & $0.1665167$ & $0.6941287$ & $0.7057883$ \\
$1.750$ & $0.6782065$ & $0.1503939$ & $0.6756727$ & $0.6858794$ \\
$1.800$ & $0.6585134$ & $0.1339708$ & $0.6563256$ & $0.6651316$ \\
$1.850$ & $0.6384329$ & $0.1156335$ & $0.6365762$ & $0.6440437$ \\
$1.900$ & $0.6176244$ & $0.0953090$ & $0.6160842$ & $0.6222740$ \\
$1.950$ & $0.5944161$ & $0.0698590$ & $0.5931803$ & $0.5981428$ \\
$1.998$ & $0.5663460$ & $0.0146697$ & $0.5653943$ & $0.5692133$ \\
\hline
\end{tabular*}
\end{table}

\begin{pf*}{Proof of Proposition~\ref{prop:albintyperepr}}
As in the proof of Proposition~\ref{prop:discreteanalog}, it suffices
to show that, whenever $u_T\to u\in(0,1)$,
\[
\lim_{T\to\infty} \Prob \Bigl(\sup_{k\in\Z: -u_T T\le k\eta\le(1-u_T)
T}
Z_{k\eta} =0 \Bigr) = \Prob \Bigl(\sup_{k\in\Z}
Z_{k\eta} =0 \Bigr).
\]
A sandwich argument readily establishes that
\[
\lim_{T\to\infty} \sup_{k\in\Z: -u_T T\le k\eta\le(1-u_T) T} Z_{k\eta} =
\sup_{k\in\Z} Z_{k\eta}.
\]
The claim follows since almost sure convergence implies convergence in
distribution.
\end{pf*}

\vspace*{-3pt}\section{Simulated values}\vspace*{-3pt}
\label{sec:simulatedvalues}

This appendix lists our estimates for $\mathcal
H_\alpha^\eta(T)$ in tabular form for $\eta=1/2^{18}$ and $T=128$,
along with the sample standard deviation.
We also list the lower and upper bounds on $\mathcal H_\alpha$,
where we note that these are estimated values since they
depend on $\mathcal H_\alpha^\eta(T)$.
We cannot report these bounds for $\alpha<1$, since our choice of
parameter values causes the methodology to break down.
Our methods can be applied with different parameter values to obtain
bounds in this regime,
but this requires more computing time and is not pursued in this paper. These numerical results are summarized in
Table \ref{tabB.1}.

\end{appendix}

\section*{Acknowledgments}
This work was conducted while B. Yakir was a visiting professor at Georgia Tech,
and was supported in part by Grant 325/09 of the Israeli Science Foundation and Grant
CMMI-1252878 of the National Science Foundation.
We thank Allen Belletti and Lawrence Sharp for technical assistance
with our high-performance computing experiments,
and David Goldberg for valuable discussions.

%



\printhistory

\end{document}